\documentclass{amsart}
\usepackage{array}
\newcolumntype{P}[1]{>{\raggedright\let\newline\\\arraybackslash\hspace{0pt}}m{#1}}
\usepackage{graphicx,verbatim, amsmath, amssymb, amsthm, amsfonts, epsfig, amsxtra,ifthen,mathtools,epstopdf,caption,enumitem,hhline,bbm,capt-of,longtable}	
\usepackage[bookmarks=true, bookmarksopen=false,%
    colorlinks=true,%
    linkcolor=darkblue,%
    citecolor=darkblue,%
    filecolor=darkblue,%
    menucolor=darkblue,%
    linktoc=all,%
    urlcolor=darkblue
]{hyperref}
\usepackage{xcolor}
\definecolor{darkblue}{cmyk}{1,0.3,0,0.1}  
\DeclareFontFamily{U}{mathx}{\hyphenchar\font45}
\DeclareFontShape{U}{mathx}{m}{n}{<-> mathx10}{}
\DeclareSymbolFont{mathx}{U}{mathx}{m}{n}
\DeclareMathAccent{\widebar}{0}{mathx}{"73}
\DeclareGraphicsExtensions{.ps}
\newtheorem{proposition}{Proposition}[section]
\newtheorem{theorem}[proposition]{Theorem}

\newtheorem{lemma}[proposition]{Lemma}

\theoremstyle{definition}

\theoremstyle{remark}
\newtheorem{remark}[proposition]{Remark}

\numberwithin{equation}{section}

\newcommand{\newword}[1]{\textbf{\emph{#1}}}

\newcommand{\integers}{\mathbb Z}
\newcommand{\rationals}{\mathbb Q}

\newcommand{\reals}{\mathbb R}

\newcommand{\set}[1]{{\lbrace #1 \rbrace}}

\newcommand{\brr}[1]{{\bigl\langle #1 \bigr\rangle}}
\newcommand{\brrr}[1]{{\Bigl\langle #1 \Bigr\rangle}}

\newcommand{\F}{{\mathcal F}}

\newcommand{\ck}{\spcheck}

\newcommand{\Comp}{\mathrm{Comp}_C}

\newcommand{\g}{\mathbf{g}}
\newcommand{\gp}{\overline{\g}}

\renewcommand{\b}{\mathbf{b}}

\newcommand{\kk}{{\boldsymbol{k}}}
\renewcommand{\ll}{{\boldsymbol\ell}}

\renewcommand{\a}{\mathbf{a}}

\renewcommand{\v}{\mathbf{v}}

\newcommand{\tB}{{\widetilde{B}}}

\newcommand{\Fan}{\operatorname{Fan}}

\newcommand{\re}{\mathrm{re}}

\renewcommand{\d}{{\mathfrak d}}

\newcommand{\RSChar}{\Phi}
\newcommand{\RS}{\RSChar}

\newcommand{\AP}[1]{\RS_{#1}}
\newcommand{\APre}[1]{\AP{#1}^\re}

\newcommand{\margincolor}{red}      
\definecolor{darkgreen}{rgb}{0,0.7,0}

\newcounter{margincounter}
\newcommand{\marginnum}{
\ifnum\value{margincounter}<10
\textcolor{\margincolor}{\begin{picture}(0,0)\put(2.2,2.4){\circle{9}}\end{picture}\footnotesize\arabic{margincounter}}
\else\ifnum\value{margincounter}<100
\textcolor{\margincolor}{\begin{picture}(0,0)\put(4.256,2.5){\circle{11}}\end{picture}\footnotesize\arabic{margincounter}}
\else
\textcolor{\margincolor}{\begin{picture}(0,0)\put(6.8,2.5){\circle{14}}\end{picture}\footnotesize\arabic{margincounter}}
\fi\fi
}

\makeatletter
\newcommand{\switchmargin}{
\if@reversemargin
\normalmarginpar
\else
\reversemarginpar
\fi
}
\makeatother

\author{Nathan Reading}
\author{Salvatore Stella}
\title{Universal cluster algebras of affine type}
\title[Neighboring seeds, universal coefficients, finite mutation-type]{Neighboring seeds in affine type: \\Universal coefficients and finite mutation-type}
\address[Nathan Reading]{Department of Mathematics, NC State University, Raleigh, USA}
\address[Salvatore Stella]{Dipartimento di Ingegneria e Scienze dell'Informazione e Matematica, Universit\`{a} degli Studi dell'Aquila, IT}
\thanks{Nathan Reading was partially supported by the Simons Foundation under award number 581608 and by the National Science Foundation under award number DMS-2054489.
Salvatore Stella was partially supported by PRIN 20223FEA2E, \lq\lq Cluster algebras and Poisson Lie groups\rq\rq and INdAM/GNSAGA}
\subjclass{13F60}

\allowdisplaybreaks

\begin{document}

\begin{abstract}
Neighboring seeds in a cluster algebra of affine type are seeds that are as close as possible to the boundary of the $\g$-vector fan.
This short note highlights the characterization of neighboring seeds given in a recent paper of Reading, Rupel, and Stella and applies that characterization in two ways.
We prove a conjecture on the construction of universal geometric cluster algebras of affine type.
We also characterize extended exchange matrices of affine type that are mutation-finite.
\end{abstract}

\maketitle


\setcounter{tocdepth}{1}
\tableofcontents

\section{Introduction}
Universal cluster algebras of finite type were constructed in \cite[Section~12]{ca4}.
They are the universal objects in the sense of coefficient specialization, among cluster algebras with a fixed exchange matrix of finite type.
The notion of universal cluster algebras (of geometric type) was extended to arbitrary exchange matrices in~\cite{universal}. 
In this paper, we construct universal geometric cluster algebras of affine type, confirming and adding detail to an earlier conjecture \cite[Conjecture~10.15]{universal}.
This expands the list of exchange matrices known to admit universal geometric coefficients to include rank~$2$ and finite type~\cite{universal}, certain marked surfaces~\cite{unisurface,unitorus,unisphere,unidread}, certain marked orbifolds~\cite{VielThesis}, and now affine type.
The previously known cases include the classical affine types, but here we give a uniform approach that includes all affine types (including twisted affine types).

The sticking point of the conjecture was a lack of detailed information about the space outside of the $\g$-vector fan in affine type (whose closure is called the \newword{imaginary wall} because it is a wall in the cluster scattering diagram~\cite{affscat}), and particularly the behavior of mutation maps in the imaginary wall.
That detailed information has now been provided in \cite{affdomreg} in the process of determining, in affine type, the \newword{dominance regions} \cite{RSDom} that mediate basis changes between the theta basis~\cite{GHKK} and other pointed bases~\cite{FanQin}.
Thus, this paper also serves to highlight a powerful proof technique that was central to \cite{affdomreg} and is central to this paper:  
Reducing facts about a cluster algebra of affine type (or a related algebraic or discrete-geometric object) to facts in the imaginary wall, where they essentially coincide with corresponding facts in a cluster algebra of finite type C.
The appearance of finite type C in the ``Coxeter-Catalan combinatorics'' of affine type goes back at least to McCammond and Sulway~\cite{McSul}, who completed the affine noncrossing partition poset to a lattice by adjoining a product of finite-type-B noncrossing partition lattices.
(In their setting and some other settings, the distinction between types B and C is meaningless.)
More recently, type-C-Catalan combinatorics appears inside Coxeter-Catalan combinatorics of affine type in \cite{affncA,affncD,affdomreg,affdenom,afftheta}.
Arguably, the appearance of finite type~C in analogous situations goes much further back in the representation theory of finite-dimensional algebras to the ``tubes'' that appear in AR-quivers of affine type, because the combinatorics near the bottoms of tubes recovers the simplicial cyclohedron (also known as the type-B associahedron) \cite{BottTaubes,Simion}.

Already in~\cite{affdenom}, the imaginary wall was given a fan structure that exhibits the combinatorics of the cyclohedron. 
The contribution of \cite{affdomreg} concerns \newword{neighboring seeds}, meaning seeds that are as close as possible to the imaginary wall.
The first key fact is that, when the initial seed is a neighboring seed, the imaginary wall not only has type-C combinatorics, but is \emph{metrically} the direct sum of $\g$-vector fans of finite type-C and a fan with only two cones: the zero cone and a ray.
The second key fact is that, for any mutation of the finite type-C fans, there is a corresponding sequence of mutations of the affine fan that accomplishes that type\nobreakdash-C mutation on the imaginary wall, while preserving the property that the initial seed is neighboring.
These two facts together allow us to use the known existence of universal geometric coefficients of finite type to complete the construction of universal geometric coefficients of affine type.
(Seeds that satisfy the description of neighboring seeds from \cite[Theorem~4.38]{affdomreg}---quoted here as Theorem~\ref{neigh B}---also appear in work of Felikson and Tumarkin~\cite{FeTuCoeff} and of Greenberg and Kaufman~\cite{GreenbergKaufman}.
See \cite[Remark~4.39]{affdomreg}.)

In this paper, we also take advantage of the detailed information about neighboring seeds provided by~\cite{affdomreg} to give a characterization of which extended exchange matrices of affine type are of finite mutation-type.
Extended exchange matrices of finite mutation-type were classified by Felikson and Tumarkin in~\cite{FeTuCoeff}.
They observe that a necessary condition for an extended exchange matrix $\tB$ to be of finite mutation type is that the underlying exchange matrix $B$ is of finite mutation type, and they determine which coefficient rows can be adjoined to $B$ while preserving mutation-finiteness.
Our result on extended exchange matrices of affine type and finite mutation-type is not really an application of results on neighboring seeds.
Rather, our result comes about because the Felikson--Tumarkin result uses neighboring seeds and because the information about neighboring seeds in~\cite{affdomreg} suggests how one should phrase the characterization in affine type:
If $B$ is of affine type, then $\tB$ is of finite mutation-type if and only if each coefficient row of $\tB$ is in the imaginary wall (more specifically, the closure of the boundary of the $\g$-vector fan for $B^T$). 
We prove the characterization using established tools for mutation maps of affine type, in the process making a minor correction to the affine case of the Felikson--Tumarkin result about extended exchange matrices of finite mutation-type from marked surfaces.
(See Remark~\ref{correction}.)

We now review the necessary background and the results on neighboring seeds and then state and prove the result on universal geometric coefficients of affine type and the mutation-finiteness result.

\section{Acyclic seeds in a cluster algebra of affine type}\label{acyc sec}
Let $B$ be an $n\times n$ \newword{exchange matrix} (a skew-symmetrizable integer matrix).
An exchange matrix $B=[b_{ij}]$ is \newword{acyclic} if, possibly after reindexing, it has the property that $b_{ij}\leq 0$ whenever $i>j$.
Whenever $B$ is acyclic, we will assume such an indexing.
An exchange matrix determines a Cartan matrix $A$ by changing the signs of all entries to nonpositive and then replacing each $0$ on the diagonal with $2$.
An exchange matrix $B$ is of finite type if it is mutation equivalent (i.e. related by a sequence of mutations) to an exchange matrix whose associated Cartan matrix is of finite type.
Equivalently, the cluster algebra determined by $B$ has finitely many seeds.
An exchange matrix is of \newword{affine type} if it is mutation-equivalent to an \emph{acyclic} exchange matrix whose associated Cartan matrix is of affine type.
When $B$ is $3\times 3$ or larger, it is of affine type if and only if it is not of finite type but the number of seeds within distance $k$ of the initial seed is bounded above by a linear function of $k$.
(Combine \cite[Theorem~3.5]{Seven} and \cite[Theorem~1.1]{FeShThTu12}.)

We assume the usual notion of matrix mutation.
If ${\kk=k_rk_{r-1}\cdots k_1}$ is a sequence of indices, then $\mu_\kk(B)$ means $\mu_{k_r}(\mu_{k_{r-1}}(\cdots(\mu_{k_1}(B))\cdots))$.
The \newword{mutation map} ${\eta^B_\kk:\reals^n\to\reals^n}$ takes the input vector in $\reals^n$, places it as an additional row below $B$, mutates the resulting matrix according to the sequence $\kk$, and outputs the bottom row of the mutated matrix.
There is a fan $\F_B$ called the \newword{mutation fan} for $B$ \cite[Definition~5.12]{universal}.
Roughly speaking, the fan $\F_B$ is determined by the common domains of linearity of all mutation maps.
For any sequence $\kk$, the mutation map $\eta^B_\kk$ is a piecewise linear isomorphism from $\F_B$ to $\F_{\mu_\kk(B)}$, linear on every cone of~$\F_B$.

Mutation fans of acyclic affine type can be understood through two combinatorial tools: the doubled Cambrian fans of~\cite{afframe} and the affine almost positive roots model of~\cite{affdenom}.
We will use results of \cite{afframe,affdenom,affscat,affdomreg} where the roles of $B$ and its transpose $B^T$ are reversed relative to what we need here.
We will quote the results as they appear in those papers and then apply a transpose when we use them.

Suppose $B$ is an acyclic exchange matrix.
Let $V$ be a real vector space with a basis $\alpha_1,\ldots,\alpha_n$, which we take to be the simple roots of the Kac-Moody root system~$\RS$ associated to $A$.
The \newword{root lattice} is the lattice in $V$ spanned by $\alpha_1,\ldots,\alpha_n$.
There is also a basis of simple co-roots $\alpha\ck_1,\ldots,\alpha\ck_n$, obtained from the simple roots by a certain scaling.
The dual vector space $V^*$ has a basis $\rho_1,\ldots,\rho_n$ of fundamental weights, and this basis is dual to the basis of simple co-roots in~$V$. 
The \newword{weight lattice} is the lattice in $V^*$ spanned by $\rho_1,\ldots,\rho_n$.
We identify $V^*$ with $\reals^n$ by identifying vectors in $\reals^n$ with fundamental-weight coordinates in $V^*$.

The simple roots determine simple reflections $S=\set{s_1,\ldots,s_n}$ that generate the Coxeter group $W$ associated to $A$.
The sign information in $B$ that is forgotten in order to determine $A$ amounts to the choice of a Coxeter element $c$ of $W$, and thus the information in $B$ amounts to the information in $A$ and $c$.

Now suppose in addition that $B$ is of affine type.
As usual, we write~$\delta$ for the unique positive imaginary root in $\RS$ that is not a positive multiple of other roots.
One can define a $c$-Cambrian fan in $V^*$, in the general sense of \cite[Section~9]{typefree}.
The $c$-Cambrian fan covers the Tits cone determined by $W$ and is contained in the $\g$-vector fan associated to $B$.
The \newword{doubled Cambrian fan} is the union of the $c$-Cambrian fan and the antipodal image of the $(c^{-1})$-Cambrian fan.
Since $B$ is of affine type, the Tits cone is a halfspace in $V^*$ bounded by $\delta^\perp$ and the doubled Cambrian fan coincides with the $\g$-vector fan of~$B$ \cite[Corollary~1.3]{afframe}.
As a result, for $B$ of acyclic affine type, the $g$-vector fan of $B$ covers all of $V^*$ except for the relative interior of a codimension-$1$ cone $\d_\infty$ that we call the \newword{imaginary wall}, because it is a wall in the cluster scattering diagram~\cite{affscat} in the sense of~\cite{GHKK}.

The affine almost positive roots model also depends on $B$ via $A$ and $c$.
It begins with a subset $\AP{c}$ of the root system $\RS$ determined by $A$ called the \newword{almost positive Schur roots}.
The negative roots in $\AP{c}$ are the negatives $-\alpha_1,\ldots,-\alpha_n$ of the simple roots.
The unique imaginary root in $\AP{c}$ is $\delta$, and we write $\APre{c}$ for $\AP{c}\setminus\set\delta$.
There is a compatibility relation on $\AP{c}$ such that the nonnegative spans of pairwise compatible subsets of $\AP{c}$ are the cones in a complete simplicial fan $\Fan_c(\RS)$ in~$V$.
The maximal pairwise compatible subsets are called \newword{$c$-clusters}.
More specifically, \newword{imaginary $c$-clusters}, those that contain~$\delta$, have size $n-1$ and \newword{real $c$-clusters}, those that don't contain~$\delta$, have size $n$.
The vectors in any real $c$-cluster are a $\integers$-basis for the root lattice \cite[Proposition~5.14(2)]{affdenom} and the vectors in any imaginary $c$-cluster are a $\integers$-basis for the intersection of the root lattice in $V$ with the $\reals$-linear span of the imaginary $c$-cluster \cite[Proposition~5.14(6)]{affdenom}.

There is a piecewise linear map $\nu_c:V\to V^*$ that is linear on every cone of $\Fan_c(\RS)$ and thus induces a fan structure $\nu_c(\Fan(\RS))$ on $V^*$,
The fan $\nu_c(\Fan(\RS))$ coincides with the mutation fan~$\F_{B^T}$ \cite[Theorem~2.9]{affscat}.
The map $\nu_c$ is a bijection from the root lattice in $V$ to the weight lattice in~$V^*$.

The (simple-root coordinates of) roots $\AP{c}$ are the denominator vectors of cluster variables associated to $B$ \cite[Theorem~1.2]{affdenom} and of the theta function indexed by $\nu_c(\delta)$ \cite[Corollary~3.6]{afftheta}.
The (fundamental-weight coordinates of) the vectors $\nu_c(\APre{c})$ are the $\g$-vectors of cluster variables associated to $B$ \cite[Proposition~9]{Rupel}.
Thus the unique ray of $\F_{B^T}$ that is not in the $\g$-vector fan is spanned by $\nu_c(\delta)$, and the star of that ray (the set of cones containing that ray) is a fan such that the union of its cones is the imaginary wall.
The cones of in this star are called \newword{imaginary cones}.
Maximal imaginary cones are of dimension $n-1$.

We will need the following lemma.

\begin{lemma}\label{int span}
Suppose $B$ is acyclic of affine type.
Suppose $C$ is a cone in $\F_{B^T}$ and $\v_1,\ldots,\v_k$ are the shortest integer vectors in the rays of $C$.
If $\v$ is an integer point in $C$, then $\v$ is a nonnegative $\integers$-linear combination of $\v_1,\ldots,\v_k$.
\end{lemma}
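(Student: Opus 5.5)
The idea is to move the statement from $V^*$ back to $V$ using $\nu_c$, and to prove it there with the lattice-basis facts about $c$-clusters from \cite{affdenom}.

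\textbf{Reduction to $V$.} Recall that $\F_{B^T}=\nu_c(\Fan_c(\RS))$ and that $\nu_c$ is linear on each cone of $\Fan_c(\RS)$. So every cone $C$ of $\F_{B^T}$ is $\nu_c(D)$ for a cone $D$ of $\Fan_c(\RS)$, where $D$ is the nonnegative span of a pairwise compatible set $\set{\beta_1,\ldots,\beta_k}\subseteq\AP{c}$. On $D$ the map $\nu_c$ is the restriction of a linear map. Moreover, $\nu_c$ is a bijection from the root lattice onto the weight lattice, i.e. onto $\integers^n$ in fundamental-weight coordinates. Hence the integer points of $C$ are exactly the $\nu_c$-images of the root-lattice points of $D$.

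The vectors $\v_i$ should be $\nu_c(\beta_i)$. To check this, note the following.
\begin{itemize}
\item $\nu_c(\beta_i)$ is an integer vector on the corresponding ray of $C$.
\item Suppose $\nu_c(\beta_i)=m\,\v_i$ with $m\geq 1$ an integer. Then $\nu_c^{-1}(\v_i)$ is a root-lattice point on the ray through $\beta_i$, equal to $\beta_i/m$, because $\nu_c$ is linear on that ray.
\item The roots in $\AP{c}$ are primitive in the root lattice: real roots are primitive, and $\delta$ is primitive by definition. So $m=1$.
\end{itemize}
Alternatively, primitivity follows from the basis facts used below, since each $\beta_i$ lies in some cluster that is a $\integers$-basis of the appropriate lattice.

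\textbf{Proof in $V$.} It suffices to show that every root-lattice point $\mathbf{x}\in D$ is a nonnegative integer combination of $\beta_1,\ldots,\beta_k$. Applying the linear map $\nu_c|_D$ then gives the conclusion for $\v=\nu_c(\mathbf{x})$. The cones are simplicial, so $\mathbf{x}=\sum a_i\beta_i$ with unique real coefficients $a_i\ge 0$; the task is to show the $a_i$ are integers. Extend $\set{\beta_1,\ldots,\beta_k}$ to a maximal compatible set, that is, a $c$-cluster.
\begin{itemize}
\item \emph{Real cluster.} By \cite[Proposition~5.14(2)]{affdenom}, the cluster is a $\integers$-basis of the root lattice. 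So $\mathbf{x}$ has integer coordinates in that basis. By uniqueness these coordinates are the $a_i$, padded with zeros.
\item \emph{Imaginary cluster.} By \cite[Proposition~5.14(6)]{affdenom}, the cluster is a $\integers$-basis for the intersection of the root lattice with its real span. The point $\mathbf{x}$ lies in that intersection, so the same argument applies.
\end{itemize}

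\textbf{Main obstacle.} The only delicate point is the bookkeeping needed to ensure that ``shortest integer vector on a ray of $C$'' really corresponds to $\nu_c(\beta_i)$. This uses only that $\nu_c$ is a linear lattice bijection on each cone; the lattice-basis input itself is quoted. Note also that every cone, including imaginary cones containing the ray $\nu_c(\delta)$, lies in some maximal cone of one of the two kinds above, so the case split covers all cones.
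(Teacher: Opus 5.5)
Your proposal is correct and is essentially the paper's proof: pull back along $\nu_c$, pass to a maximal cone (a real or imaginary $c$-cluster), and use that the cluster is a $\mathbb{Z}$-basis of the relevant lattice together with the fact that $\nu_c$ is a lattice bijection. Your extra check that each $\nu_c(\beta_i)$ is the shortest integer vector on its ray fills in a step the paper leaves implicit. For $\delta$, rely on your alternative argument from the cluster-basis property, since the paper's definition of $\delta$ (not a positive multiple of other roots) does not literally say it is primitive in the root lattice.
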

\begin{proof}
We may as well take $C$ to be a maximal cone.
If $C$ is a cone in the $\g$-vector fan, then $C$ is the image, under $\nu_c$, of the cone spanned by some real $c$-cluster.
The vectors in the real $c$-cluster are a $\integers$-basis for the root lattice in $V$.
Otherwise, $C$ is a cone in the star of the imaginary ray, and is the image under $\nu_c$ of some imaginary $c$-cluster.
The vectors in the imaginary $c$-cluster are a $\integers$-basis for the intersection of the root lattice in $V$ with the $\reals$-linear span of the imaginary $c$-cluster.
Since $\nu_c$ takes the root lattice in $V$ bijectively to the weight lattice in $V^*$, we see in either case that~$\v$ is a nonnegative $\integers$-linear combination of $\v_1,\ldots,\v_k$.
\end{proof}

\section{Neighboring seeds in a cluster algebra of affine type}\label{neigh sec}
Now let $B$ be an arbitrary exchange matrix of affine type, not necessarily acyclic.
Since each mutation map is an isomorphism of mutation fans, we know from the results quoted in Section~\ref{acyc sec} that $\F_{B^T}$ has a unique ray, the \newword{imaginary ray}, that is not contained in any full-dimensional cone, and that the star of that ray consists of $(n-1)$-dimensional \newword{imaginary cones} and their faces.
There is a vector~${\delta^B\in V}$ such that $-\frac12B\delta^B$ is the shortest integer vector in the imaginary ray and such that all imaginary cones are contained in~$(\delta^B)^\perp$ \cite[Proposition~4.24]{affdomreg}.
Thus we call the union of the imaginary cones the \newword{imaginary wall}.
When $B$ is acyclic, the vector~$\delta^B$ is the same as the vector $\delta$ from Section~\ref{acyc sec}.

Each imaginary cone is simplicial and has $n-2$ rays spanned by $\g$-vectors of cluster variables, in addition to the imaginary ray.
A \newword{neighboring seed} is a seed such that $n-2$ of its $\g$-vectors are in the imaginary wall (necessarily all in the same imaginary cone).
The property of being neighboring depends only on the exchange matrix \cite[Lemma~4.36]{affdomreg}, so we will refer to \newword{neighboring exchange matrices}.

A \newword{quasi-leaf} is a column $i$ of an exchange matrix $B$ that has at most two nonzero entries and such that, if $b_{ji}\neq0$ and $b_{ki}\neq0$, then the restriction of $B$ to rows and columns $i$, $j$, and $k$ is $\pm\begin{bsmallmatrix*}[r]0&1&-1\\-1&0&1\\1&-1&0\end{bsmallmatrix*}$.
As we describe block decompositions of matrices, an \newword{empty block} is a block with $0$ columns and/or $0$ rows.
The following is \cite[Theorem~4.38]{affdomreg}.

\begin{theorem}\label{neigh B}
Suppose $B$ is an exchange matrix of affine type.
Then the following conditions are equivalent.
\begin{enumerate}[label=\rm(\roman*), ref=(\roman*)]
\item \label{neigh}
$B$ is a neighboring exchange matrix.
\item \label{aff 2}
There exist indices $i$ and $j$ such that $\begin{bsmallmatrix}0&b_{ij}\\b_{ji}&0\end{bsmallmatrix}$ is of affine type.
\item \label{neigh detailed}
Up to relabeling, $B$ is
    $
      \begin{bsmallmatrix}
        B_{11} 	& 0 		& 0 		& B_{14} \\ 
        0 		& B_{22} 	& 0 		& B_{24} \\
        0 		& 0 		& B_{33} 	& B_{34} \\
        B_{41} 	& B_{42} 	& B_{43} 	& B_{44}
      \end{bsmallmatrix},
    $ 
where $B_{44}$ is a rank-$2$ exchange matrix of affine type, the square matrices $B_{11}$, $B_{22}$, and $B_{33}$ are in order of increasing size, and the following conditions hold for ${\ell\in\set{1,2,3}}$.
\begin{itemize}
\item
$B_{\ell\ell}$ is either an empty block or an exchange matrix of finite type A. 
\item
If $B_{\ell\ell}$ is nonempty, then its last column is a quasi-leaf of $B_{\ell\ell}$.
\item
If $B_{\ell4}$ is nonempty, then it is nonzero only in its last row.
\item
If $B_{4\ell}$ is nonempty, then it is nonzero only in its last column.
\item
If $B_{\ell4}$ is nonempty (equivalently if $B_{4\ell}$ is nonempty), then the nonzero rows and columns in $\begin{bsmallmatrix} 0 & B_{\ell4} \\ B_{4\ell} & B_{44} \end{bsmallmatrix}$ are of the form
$\begin{bsmallmatrix*}[r]
	0 & 1 & -1 \\
	-1 & 0 & 2 \\
	1 & -2 & 0
	\end{bsmallmatrix*}$,
$\begin{bsmallmatrix*}[r]
	0 & 2 & -2 \\
	-1 & 0 & 2 \\
	1 & -2 & 0
	\end{bsmallmatrix*}$,
$\begin{bsmallmatrix*}[r]
	0 & 1 & -1 \\
	-2 & 0 & 2 \\
	2 & -2 & 0
	\end{bsmallmatrix*}$,
$\begin{bsmallmatrix*}[r]
	0 & 3 & -3 \\
	-1 & 0 & 2 \\
	1 & -2 & 0
	\end{bsmallmatrix*}$,
$\begin{bsmallmatrix*}[r]
	0 & 1 & -1 \\
	-3 & 0 & 2 \\
	3 & -2 & 0
	\end{bsmallmatrix*}$,
$\begin{bsmallmatrix*}[r]
	0 & 1 & -2 \\
	-2 & 0 & 4 \\
	1 & -1 & 0
	\end{bsmallmatrix*}$, or
$\begin{bsmallmatrix*}[r]
	0 & 2 & -1 \\
	-1 & 0 & 1 \\
	2 & -4 & 0
	\end{bsmallmatrix*}$.\\
If the matrix contains an entry with absolute value greater than $2$, then $B_{11}$ and $B_{22}$ are empty.
\end{itemize}
\end{enumerate}
\end{theorem}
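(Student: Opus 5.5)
The plan is to prove the cycle of implications \ref{neigh}$\Rightarrow$\ref{aff 2}$\Rightarrow$\ref{neigh detailed}$\Rightarrow$\ref{neigh}. Throughout, we work in the acyclic picture of Section~\ref{acyc sec} whenever convenient, transporting statements along mutation maps. This is legitimate because each $\eta^B_\kk$ is a piecewise-linear isomorphism $\F_{B^T}\to\F_{\mu_\kk(B)^T}$ that carries imaginary cones to imaginary cones.

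\ref{neigh}$\Rightarrow$\ref{aff 2}. Let the seed have $\g$-vectors $\g_1,\ldots,\g_n$, and suppose $\g_3,\ldots,\g_n$ span a face of an imaginary cone $C$. Freeze the indices $3,\ldots,n$. Equivalently, restrict to the $2\times2$ submatrix on indices $i,j$, which here are $1,2$. The cluster algebra of this restriction is realized by the seeds obtained from the given one by mutating only at $1$ and $2$. These seeds all contain $\g_3,\ldots,\g_n$, so their $\g$-cones are the full-dimensional cones of $\F_{B^T}$ containing the face spanned by $\g_3,\ldots,\g_n$. Since that face lies in the imaginary wall, it lies in the boundary of the $\g$-vector fan. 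Hence the link of this face in the $\g$-fan cannot be complete: it is missing the imaginary ray direction. It follows that the rank-2 seed pattern is infinite, i.e.\ $\begin{bsmallmatrix}0&b_{ij}\\b_{ji}&0\end{bsmallmatrix}$ is not of finite type, so $|b_{ij}b_{ji}|\ge4$. Conversely, $|b_{ij}b_{ji}|>4$ would produce exponential growth of seeds, which contradicts affine type. Therefore $|b_{ij}b_{ji}|=4$, which is affine.

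\ref{aff 2}$\Rightarrow$\ref{neigh detailed}. This is the main obstacle, and the approach is a combinatorial classification. First, every full rank-$3$ subquiver containing $i,j$ must be mutation-finite and not of hyperbolic type. A rank-3 check using the Markov-type invariant then gives the list of allowed $3\times3$ blocks (the seven matrices in the statement) for any $k$ adjacent to $i$ or $j$. Second, we show that vertices not adjacent to $\{i,j\}$ hang off in type-A tails attached through quasi-leaves. The argument is that any extra cycle, or any branching, yields a subquiver of mutation-infinite or indefinite type. For this we would compare with the known lists of minimal non-affine diagrams, in the style of Seven and Felikson--Shapiro--Tumarkin. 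The bound of at most three tails, with size constraints, comes from the requirement that the resulting diagram is mutation-equivalent to an affine Dynkin type rather than an extended affine or wild one. I would organize this step by the value of $(|b_{ij}|,|b_{ji}|)\in\{(2,2),(1,4),(4,1)\}$. The hardest part is ruling out exotic configurations in the skew-symmetrizable case, and I would first try to reduce it via folding to the skew-symmetric case.

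\ref{neigh detailed}$\Rightarrow$\ref{neigh}. Given the explicit form, perform the sequence of sink/source mutations that makes each type-A block acyclic. The resulting matrix is acyclic of affine type, and $B_{44}$ corresponds to a rank-2 affine parabolic. In the almost positive roots model, exhibit $n-2$ roots compatible with $\delta$: the roots supported on the type-A tails. These form an imaginary $c$-cluster, so the corresponding $\g$-vectors lie in the imaginary wall. Since neighboringness depends only on the exchange matrix, this finishes the proof.
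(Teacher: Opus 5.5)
The paper does not prove this theorem; it quotes it from \cite[Theorem~4.38]{affdomreg}. Your proposal therefore has to stand on its own, and only (i)$\Rightarrow$(ii) does. That step is sound. If the rank-$2$ pattern at $i,j$ were finite, the $\g$-cones containing $F$ would close up around $F$ and cover a neighborhood of a relative-interior point of $F$, and every such neighborhood meets the relative interior of $\d_\infty$. (The upper bound $|b_{ij}b_{ji}|\le 4$ should be justified by mutation-finiteness of affine type for $n\ge3$, not by ``exponential growth''; a wild rank-$2$ subpattern by itself only contributes a path to the exchange graph.)

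The implication (ii)$\Rightarrow$(iii) carries essentially all the content, and what you give is a plan rather than a proof. The skew-symmetrizable case is explicitly left open (``I would first try \ldots folding''). Worse, your guiding heuristic is false. The blocks $B_{\ell\ell}$ are arbitrary members of the type-A mutation class, so they may contain oriented $3$-cycles and vertices of degree $3$ or $4$. For example, an oriented triangle is of type $A_3$, and each of its vertices is a quasi-leaf. Moreover, every special vertex lies on an oriented triangle with the affine pair (all seven listed $3\times3$ matrices are cyclic). An argument that ``any extra cycle, or any branching'' forces a non-affine subquiver would therefore exclude genuine neighboring matrices. None of the facts that actually need proof are established:
that the vertices adjacent to the affine pair are pairwise non-adjacent and at most three in number;
that each component of the complement of the affine pair is of type A and meets the pair through a single quasi-leaf;
and that an entry of absolute value greater than $2$ forces $B_{11}$ and $B_{22}$ to be empty.

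Your (iii)$\Rightarrow$(i) does not work. Whatever mutations you use, the endpoint you describe cannot exist. For $n\ge 3$, an acyclic matrix of affine type has no $2\times2$ principal submatrix of affine type, because proper principal submatrices of an affine Cartan matrix are of finite type. Concretely, sink/source mutations leave $b_{ij}$ and $b_{ji}$ unchanged and never reverse an arrow on an oriented cycle, so the oriented triangles through the affine pair survive. More fundamentally, exhibiting \emph{some} imaginary $c$-cluster in $\AP{c}$ says nothing about the particular seed whose exchange matrix is $B$. You would have to follow that seed's cluster variables back to the acyclic seed and show that its $n-2$ non-affine $\g$-vectors lie in $\d_\infty$, which is the entire content of the implication.

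Since (iii)$\Rightarrow$(ii) is immediate (take the affine indices), what is really missing besides the classification is a proof of (ii)$\Rightarrow$(i). A natural attempt is to run your closing-up argument backwards: alternating mutation at $i,j$ produces infinitely many $\g$-cones through each point of the relative interior of $F$. That argument would additionally require knowing that the $\g$-vector fan is locally finite away from $\d_\infty$.
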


There is a \newword{special index} for each $\ell=1,2,3$ such that $B_{\ell\ell}$ is nonempty:  It is the index of the last column/row of $B_{\ell\ell}$.
The two indices of $B_{44}$ are the \newword{affine indices}.
 Given a neighboring exchange matrix $B$, the \newword{type-C companion} of $B$ is the $(n-2)\times(n-2)$ exchange matrix $\Comp(B)$ that agrees with the non-affine rows and columns of $B$, but with all entries in special columns multiplied by~$2$.
The matrix $\Comp(B)$ is block-diagonal, and \cite[Proposition~4.50]{affdomreg} says that each diagonal block is of finite type C.
To avoid cumbersome notation, we will identify vectors in $\reals^n$ that have affine entries zero with vectors in $\reals^{n-2}$, and specifically, we will apply mutation maps $\eta^{\Comp(B)^T}$ to such vectors.
The following propositions is the combination of parts of \mbox{\cite[Proposition~4.52]{affdomreg}} and \cite[Proposition~4.53]{affdomreg}.
(The two propositions in \cite{affdomreg} distinguish between the cases where $k$ is special or not and give more detail.)

\begin{proposition}\label{nice mut}
Suppose $B$ is a neighboring exchange matrix and $k$ is a non-affine index of~$B$.
Then there exists a sequence $\ll$ of indices in $\set{k,n-1,n}$ with the following properties.
\begin{enumerate}[label=\bf\arabic*., ref=\arabic*]
\item
$\mu_\ll(B)$ is neighboring.
\item
$\Comp(\mu_\ll(B))=\mu_k(\Comp(B))$.
\item
The mutation map $\eta^{B^T}_\ll$ fixes the imaginary ray pointwise.
\item
If $x$ is a vector whose affine entries are zero, then the affine entries of $\eta^B_\ll(x)$ are again zero and $\eta^{B^T}_\ll(x)=\eta^{\Comp(B)^T}_k(x)$.
\end{enumerate}
\end{proposition}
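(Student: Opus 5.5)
The plan is to split into two cases according to whether the non-affine index $k$ is special, and in each case to write down $\ll$ explicitly and check properties 1--4 directly from the block form in Theorem~\ref{neigh B}\ref{neigh detailed}.

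\emph{Case 1: $k$ is not special.} Take $\ll=k$.
\begin{itemize}
\item By the block description, column $k$ and row $k$ of $B$ vanish in the affine positions, since $B_{\ell4}$ and $B_{4\ell}$ are supported only on the special row and column.
\item The mutation formula $b'_{ij}=b_{ij}+\operatorname{sgn}(b_{ik})[b_{ik}b_{kj}]_+$ then leaves the $2\times2$ affine block $B_{44}$ unchanged. So condition~\ref{aff 2} of Theorem~\ref{neigh B} still holds for $\mu_k(B)$, which is therefore neighboring. This gives property~1.
\item For property~2, note that $\Comp(B)$ is obtained from the non-affine part of $B$ by right multiplication by a positive diagonal matrix $D$ that is $1$ in position $k$. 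Mutation at $k$ commutes with right multiplication by such a $D$ whenever $d_k=1$, as one checks from the formula above. Also, mutation at $k$ does not change which columns are special, because the support of the affine rows and columns is untouched.
\item For property~4, the new affine entries of the appended row are $x_a+\operatorname{sgn}(x_k)[x_kb_{ka}]_+=x_a=0$. The non-affine entries are computed from row $k$ of $B^T$, which agrees with row $k$ of $\Comp(B)^T$ because column $k$ is unscaled. So $\eta^{B^T}_k(x)=\eta^{\Comp(B)^T}_k(x)$.
\item For property~3, $\eta^{B^T}_k$ fixes a vector whose $k$-th entry is $0$. So I need the $k$-th coordinate of the imaginary ray vector $-\frac12B\delta^B$ to vanish, where $\delta^B$ is the vector of \cite[Proposition~4.24]{affdomreg}. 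My first approach is to show that $\delta^B$ is supported on the affine indices: up to positive scaling it should be the null vector of the affine-type rank-$2$ block $B_{44}$, at least after choosing the indexing appropriately. Then $(B\delta^B)_k=\sum_{a\text{ affine}}b_{ka}\delta^B_a=0$ for nonspecial $k$.
\end{itemize}

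\emph{Case 2: $k$ is special.} Here $k$ is adjacent to the affine indices through one of the seven $3\times3$ patterns listed in Theorem~\ref{neigh B}. Mutating at $k$ alone destroys the block form, since it alters $B_{44}$. I would take $\ll$ to be a short word such as $n\,k\,n$ or $(n-1)\,k\,(n-1)$, where the choice depends on the sign pattern. The intent is that the affine mutations first make $k$ a source or sink relative to the affine pair, and the final affine mutation restores the affine block. This is a finite computation on each of the seven patterns, together with their negatives. The effect on the rest of $B_{\ell\ell}$ comes only through row and column $k$, and matches mutation of $\Comp(B)$ at $k$ with the doubled special column. This doubling is exactly what accounts for the factor $2$ appearing in the patterns.

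Property~4 again reduces to tracking the affine coordinates of a vector with zero affine part through three mutation steps. For property~3 I would use that mutation maps carry imaginary ray to imaginary ray and preserve shortest integer vectors. Then I compare $-\frac12B\delta^B$ with $-\frac12\mu_\ll(B)\delta^{\mu_\ll(B)}$, using the explicit affine null vectors.

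I expect Case~2 to be the main obstacle. The specific difficulty is choosing $\ll$ uniformly across the seven patterns, and handling the extra condition that $B_{11}$ and $B_{22}$ are empty when an entry of absolute value greater than $2$ appears. I would first try $\ll=n\,k\,n$ and verify it pattern by pattern, swapping the roles of $n-1$ and $n$ when signs require it.
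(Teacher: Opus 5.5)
The paper gives no argument for this statement. It is quoted from \cite{affdomreg}, where the non-special and special cases are treated separately, as in your plan.

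Your Case~1 ($\ll=k$) is correct. The affine rows and columns are untouched, $\mu_k$ commutes with doubling the special columns because column~$k$ is not scaled, and your coordinate checks for properties~3 and~4 are right. Two small fixes are needed:
\begin{itemize}
\item $B_{44}$ is invertible ($\det B_{44}=4$), so $\delta^B$ is a null vector of the affine Cartan companion of $B_{44}$, not of $B_{44}$ itself.
\item The support statement you need is exactly Proposition~\ref{neigh good stuff}. Cite it rather than trying to re-derive it from the definition of $\delta^B$.
\end{itemize}

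The genuine gap is Case~2. The word you propose fails, and the mechanism you describe is not what happens.

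Take $n=3$, $k=1$ and $B=\begin{bsmallmatrix*}[r]0&1&-1\\-1&0&2\\1&-2&0\end{bsmallmatrix*}$. Then $\mu_n(B)=\mu_{n-1}(B)=-B$, so an affine mutation does not make $k$ a source or sink; it only reverses the oriented $3$-cycle. Next, $\mu_k(-B)=\begin{bsmallmatrix*}[r]0&1&-1\\-1&0&-1\\1&1&0\end{bsmallmatrix*}$. A further $\mu_n$ or $\mu_{n-1}$ gives a matrix with all entries in $\set{0,\pm1}$, which is not neighboring by Theorem~\ref{neigh B}\ref{aff 2}. Property~4 fails as well: $\eta^{B^T}_{n\,k\,n}(e_k)=-e_k+e_{n-1}$.

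The missing idea is that $k$ must be mutated several times, passing through acyclic seeds. For this $B$ the word $\ll=k\,(n-1)\,k\,n\,k$ works: $\mu_\ll(B)=B$, $\eta^{B^T}_\ll(\pm e_k)=\mp e_k$, and $-\frac12B\delta^B=e_n-e_{n-1}$ is fixed.

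When $k$ has a neighbor in $B_{\ell\ell}$, or other blocks are present, the intermediate mutations can create entries that cancel only at the last step. These are entries between the affine indices and the neighbors of $k$, and between $k$ and the special indices of other blocks. Your remark that the effect on the rest of the matrix ``comes only through row and column $k$'' skips exactly this bookkeeping. That bookkeeping is where properties~2 and~4 are actually proved.

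Finally, no uniform short word exists. For $B=\begin{bsmallmatrix*}[r]0&3&-3\\-1&0&2\\1&-2&0\end{bsmallmatrix*}$, neither $k\,(n-1)\,k\,n\,k$ nor $k\,n\,k\,(n-1)\,k$ even returns to a neighboring matrix. By contrast, $(n-1)\,k\,(n-1)\,n\,k\,(n-1)\,k$ satisfies properties~1--4.

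So Case~2 requires pattern-dependent words, together with a verification of properties~1--4 for each of them. The proposal supplies neither.
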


Proposition~\ref{nice mut} suggests that, for each sequence $\kk$ of non-affine indices, we define an \newword{expanded sequence}~$\widehat\kk$ by replacing each index in $\kk$ with the sequence that exists in light of the proposition.
Thus $\Comp(\mu_{\widehat\kk}(B))=\mu_\kk(\Comp(B))$, the mutation map $\eta^{B^T}_{\widehat\kk}$ fixes the imaginary ray pointwise, and if $x$ has affine entries zero, then $\eta^{B^T}_{\widehat\kk}(x)$ also has affine entries zero and $\eta^{B^T}_{\widehat\kk}(x)=\eta^{\Comp(B)^T}_\kk(x)$.

The following proposition is \cite[Proposiion~4.54]{affdomreg}.

\begin{proposition}\label{neigh im wall}
Suppose $B$ is neighboring and is indexed as in Theorem~\ref{neigh B}.
Then $\d^B_\infty$ is the half-hyperplane contained in $(\delta^B)^\perp$, containing the vector $-\frac12B\delta^B$, with relative boundary the codimension-$2$ space consisting of vectors that are zero in the affine indices.
\end{proposition}

The proof of Proposition~\ref{neigh im wall} in \cite{affdomreg} establishes a more detailed fact, given as the following proposition.  
We continue to identify vectors with affine indices zero with vectors in $\reals^{n-2}$, so that $\F_{\Comp(B)^T}$ can be considered as a set of cones in $\reals^n$.

\begin{proposition}\label{neigh im cone}
Suppose $B$ is neighboring.
Then $\F_{B^T}$ restricts to a fan in $\d^B_\infty$, consisting of all cones of $\F_{\Comp(B)^T}$, in the boundary of $\d^B_\infty$, and all cones obtained as the Minkowski sum of a cone of $\F_{\Comp(B)^T}$ plus the imaginary ray.
\end{proposition}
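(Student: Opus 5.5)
The plan is to compare the restriction of $\F_{B^T}$ to the imaginary wall with the star of the imaginary ray, using the fact that imaginary cones are exactly the cones of $\F_{B^T}$ that contain the imaginary ray. By Proposition~\ref{neigh im wall}, $\d^B_\infty$ is the half-hyperplane $H$ inside $(\delta^B)^\perp$. It is spanned by the relative boundary $L=\set{x:\text{affine entries of }x\text{ are }0}$ together with the ray $\rho$ through $-\frac12B\delta^B$. Thus $H=L+\rho$, and every point of $H$ is uniquely of the form $x+t(-\frac12B\delta^B)$ with $x\in L$ and $t\ge0$.

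The first step is to identify the imaginary cones. Fix the imaginary cone $C_0$ containing the $n-2$ $\g$-vectors of the neighboring seed. Its rays other than $\rho$ lie in $H$, and since a neighboring seed's $\g$-vectors in the wall span a face avoiding $\rho$, they should lie in $L$. I would argue that they are $\g$-vectors of a cone of $\F_{\Comp(B)^T}$, namely the positive orthant of $L$, i.e.\ the initial cluster cone of $\Comp(B)^T$. One route is to use that these $\g$-vectors are standard basis vectors of the non-affine indices: the initial seed's $\g$-vectors are the unit vectors, and neighboring means $n-2$ of them lie in the wall. The wall contains $L$, so the non-affine unit vectors lie in the wall, and the orthant cone is $C_0\cap L$. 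Then $C_0=(\text{orthant of }L)+\rho$.

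The second step is to transport this via expanded sequences. For any cone $D$ of $\F_{\Comp(B)^T}$, choose a sequence $\kk$ with $D=\eta^{\Comp(B)^T}_{\kk^{-1}}$ applied to an initial orthant face of $\mu_\kk(\Comp(B))^T$; finite type guarantees that every cone is reached this way. Apply the first step to the neighboring matrix $\mu_{\widehat\kk}(B)$ to get imaginary cones of the form (orthant face)$+\rho$. Then pull back by the inverse of $\eta^{B^T}_{\widehat\kk}$, which is again a mutation map and so an isomorphism of mutation fans. By Proposition~\ref{nice mut}, this map fixes $\rho$ pointwise and agrees with $\eta^{\Comp(B)^T}_\kk$ on $L$, preserving $L$. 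It is piecewise linear and linear on each cone, so it sends $D'+\rho$ to $D+\rho$. Hence every $D+\rho$ is a cone of $\F_{B^T}$, and so is every $D$, as a face.

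The last step is completeness. The cones $D+\rho$ for $D$ maximal in the complete fan $\F_{\Comp(B)^T}$ on $L$ cover $H$. The cones of $\F_{B^T}$ form a fan, so no other cones can meet the relative interior of $H$ beyond faces of these. Hence the restriction is exactly as claimed.

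The main obstacle is the first step: showing that the non-affine cluster $\g$-vectors of a neighboring seed span precisely the orthant of $L$, rather than some other cone of $H$ avoiding $\rho$. If the unit-vector argument does not work, I would instead apply Proposition~\ref{nice mut} at an acyclic neighboring representative and read off the cone there from $\nu_c$.
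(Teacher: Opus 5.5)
Your argument is correct, and it is essentially the argument behind this statement in \cite{affdomreg}, which the paper cites rather than reproving. The route is the same: the non-affine unit vectors span the orthant of $L$, and this orthant plus the imaginary ray is an imaginary cone; expanded sequences from Proposition~\ref{nice mut} transport this to $D$ plus the imaginary ray for every cone $D$ of $\F_{\Comp(B)^T}$; completeness of the finite-type fan together with the fan property finishes. Your unit-vector justification is the right one for the first step (the remark about a face avoiding $\rho$ does not by itself force those rays into $L$), so the fallback through $\nu_c$ is unnecessary.
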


If $B$ is neighboring and  $\lambda\in\d_\infty^B$, Proposition~\ref{neigh im wall} says that we can uniquely write $\lambda=\lambda_0+\lambda_\infty$ such~$\lambda_0$ has affine entries zero and $\lambda_\infty$ is in the imaginary ray.
The following proposition is \cite[Proposiion~4.55]{affdomreg}.

\begin{proposition}\label{factor eta}
Suppose $B$ is neighboring and $\lambda\in\d_\infty^B$ with $\lambda=\lambda_0+\lambda_\infty$ as above.
If $\kk$ is a sequence of non-affine indices with expanded sequence $\widehat\kk$, then ${\eta^{B^T}_{\widehat\kk}(\lambda)=\eta_\kk^{\Comp(B)^T}(\lambda_0)+\lambda_\infty}$.
\end{proposition}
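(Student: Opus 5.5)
We prove Proposition~\ref{factor eta} by induction on the length of $\kk$, so the whole argument reduces to a single index $k$ followed by a restart at the new neighboring matrix. For the empty sequence there is nothing to prove. Write $\kk=\kk'k$, where $k$ is applied first. Then $\widehat\kk=\widehat{\kk'}\,\ll$, with $\ll$ the sequence supplied by Proposition~\ref{nice mut} for $B$ and $k$, and $\widehat{\kk'}$ the expansion of $\kk'$ computed relative to the neighboring matrix $B'=\mu_\ll(B)$. This matches the recursive definition of the expanded sequence. By items~1 and~2 of Proposition~\ref{nice mut}, $B'$ is neighboring and $\Comp(B')=\mu_k(\Comp(B))$. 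So it is enough to prove the one-step identity
\[\eta^{B^T}_\ll(\lambda)=\eta^{\Comp(B)^T}_k(\lambda_0)+\lambda_\infty,\]
and to check that the right-hand side is again a decomposition of the required form for $B'$. Once both are in hand, the induction hypothesis for $B'$ and $\kk'$ finishes the proof, because $\eta^{\Comp(B')^T}_{\kk'}\circ\eta^{\Comp(B)^T}_k=\eta^{\Comp(B)^T}_{\kk}$.

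For the one-step identity, the key point is that mutation maps are linear on each cone of the mutation fan. By Proposition~\ref{neigh im cone}, $\lambda_0$ lies in some cone $C$ of $\F_{\Comp(B)^T}$, which is a cone of $\F_{B^T}$ in the boundary of $\d^B_\infty$. The Minkowski sum $C+\reals_{\ge0}\lambda_\infty$ is also a cone of $\F_{B^T}$, and it contains both $\lambda_0$ and $\lambda_\infty$. Since $\eta^{B^T}_\ll$ is linear on that cone, we get $\eta^{B^T}_\ll(\lambda)=\eta^{B^T}_\ll(\lambda_0)+\eta^{B^T}_\ll(\lambda_\infty)$. Item~3 of Proposition~\ref{nice mut} gives $\eta^{B^T}_\ll(\lambda_\infty)=\lambda_\infty$. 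Item~4 gives $\eta^{B^T}_\ll(\lambda_0)=\eta^{\Comp(B)^T}_k(\lambda_0)$, and also shows that this vector has affine entries zero. If $\lambda_\infty=0$ or $\lambda_0=0$, the same conclusion holds directly.

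For the restart step, we need the right-hand side to be a valid decomposition for $B'$. Its first summand has affine entries zero. The imaginary ray of $\F_{B'^T}$ is the image of the imaginary ray of $\F_{B^T}$ under the mutation map, since mutation maps are isomorphisms of mutation fans. Item~3 says that image is the same ray, so $\lambda_\infty$ lies in the imaginary ray for $B'$. The image $\eta^{B^T}_\ll(\lambda)$ lies in $\d^{B'}_\infty$ because mutation maps carry the imaginary wall to the imaginary wall. By the uniqueness statement following Proposition~\ref{neigh im wall}, applied to $B'$, this is therefore the decomposition of $\eta^{B^T}_\ll(\lambda)$, and the induction applies.

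The main obstacle I expect is the linearity step. I need to be sure that $C+\reals_{\ge0}\lambda_\infty$ really is a single cone of $\F_{B^T}$ on which $\eta^{B^T}_\ll$ is linear, rather than $\lambda_0$ and $\lambda_\infty$ lying in different domains of linearity. Proposition~\ref{neigh im cone} provides exactly this, together with the fact that $\eta^{B^T}_\ll$ is linear on every cone of $\F_{B^T}$. The bookkeeping that the expanded sequence is built recursively from successive neighboring matrices is routine once the conventions are fixed.
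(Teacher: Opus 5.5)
Your proof is correct. It is the natural argument from the ingredients the paper assembles; the paper itself does not prove this statement but imports it from \cite[Proposition~4.55]{affdomreg}. Proposition~\ref{neigh im cone} places $\lambda_0$ and $\lambda_\infty$ in a common cone $C+\reals_{\ge0}\lambda_\infty$ of $\F_{B^T}$. Linearity of mutation maps on that cone gives $\eta^{B^T}_\ll(\lambda)=\eta^{B^T}_\ll(\lambda_0)+\eta^{B^T}_\ll(\lambda_\infty)$, and items~3 and~4 of Proposition~\ref{nice mut} evaluate the two summands.

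Your induction and restart step are valid but can be skipped. The full composite $\eta^{B^T}_{\widehat\kk}$ is itself linear on every cone of $\F_{B^T}$, so you can split $\lambda$ once and then quote the consequences for expanded sequences recorded right after Proposition~\ref{nice mut}.
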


We conclude this section by quoting an additional result \cite[Proposition~4.49]{affdomreg} that, when combined with Proposition~\ref{neigh im wall}, gives a precise description of the imaginary ray and imaginary wall.

\pagebreak[4]

\begin{proposition}\label{neigh good stuff}
Suppose $B$ is a neighboring exchange matrix.
\begin{enumerate}[label=\bf\arabic*., ref=\arabic*]
\item\label{neigh delta}
The vector $\delta^B$ is zero in all non-affine indices.
Its affine entries are
\begin{itemize}
\item
$1,1$ if the affine submatrix of $B$ is $\begin{bsmallmatrix*}[r]0&2\\-2&0\end{bsmallmatrix*}$,
\item
$2,1$ if the affine submatrix of $B$ is $\begin{bsmallmatrix*}[r]0&4\\-1&0\end{bsmallmatrix*}$, or
\item
$1,2$ if the affine submatrix of $B$ is $\begin{bsmallmatrix*}[r]0&1\\-4&0\end{bsmallmatrix*}$.
\end{itemize}
\item\label{neigh im ray}
The vector $-\frac12B\delta^B$ (the shortest integer vector that spans the imaginary ray) is zero in all non-affine entries.
Its affine entries are
\begin{itemize}
\item
$-1,1$ if the affine submatrix of $B$ is $\begin{bsmallmatrix*}[r]0&2\\-2&0\end{bsmallmatrix*}$,
\item
$-2,1$ if the affine submatrix of $B$ is $\begin{bsmallmatrix*}[r]0&4\\-1&0\end{bsmallmatrix*}$, or
\item
$-1,2$ if the affine submatrix of $B$ is $\begin{bsmallmatrix*}[r]0&1\\-4&0\end{bsmallmatrix*}$.
\end{itemize}
\end{enumerate}
\end{proposition}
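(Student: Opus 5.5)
The statement has two parts, and I would prove part~\ref{neigh delta} first and then deduce part~\ref{neigh im ray} by direct computation. For the computation, write $\d$-independent notation: let $v\in V$ be the candidate vector, zero in all non-affine indices and with the listed affine entries. In every case these entries are the simple-root coordinates of the imaginary root $\delta$ of the rank-$2$ affine exchange matrix $B_{44}$ (entries $(1,1)$ for $A_1^{(1)}$, and $(2,1)$ or $(1,2)$ for $A_2^{(2)}$ in its two orientations). Granting $\delta^B=v$, the vector $-\frac12Bv$ is computed blockwise.
\begin{itemize}
\item Its affine part is $-\frac12B_{44}(v_{n-1},v_n)^T$, which gives $(-1,1)$, $(-2,1)$, $(-1,2)$ respectively.
\item A non-affine entry can only be nonzero in the special row of some block $B_{\ell4}$. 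There it equals $-\frac12$ times the dot product of that row with the affine part of $v$.
\end{itemize}
Checking the seven $3\times 3$ forms in Theorem~\ref{neigh B}\ref{neigh detailed} shows this dot product always vanishes. For example, the row $(1,-1)$ against $(1,1)$ gives $0$, and the row $(1,-2)$ against $(2,1)$ gives $0$. This settles part~\ref{neigh im ray} once part~\ref{neigh delta} is known. The affine vector $(-1,1)$ (resp.\ $(-2,1)$, $(-1,2)$) is primitive, so it is indeed the shortest integer vector.

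For part~\ref{neigh delta}, I would use the characterization of $\delta^B$ underlying \cite[Proposition~4.24]{affdomreg}. There $\delta^B$ is obtained from the acyclic $\delta$ by transporting it along mutations, using the (tropical, denominator-type) mutation rule for the imaginary root. Equivalently, $\delta^B$ is the unique vector, up to the normalization fixed by the acyclic case, such that $B\delta^B$ spans the imaginary ray. The plan is to find a sequence of mutations \emph{at non-affine indices only} taking $B$ to an acyclic matrix $B'$ whose Cartan matrix is affine and in which the affine pair is still the distinguished rank-$2$ subdiagram.
\begin{itemize}
\item Each $B_{\ell\ell}$ is of finite type~A with a quasi-leaf special column, and is attached to the affine pair only through that special index.
\item Mutations inside $B_{\ell\ell}$ can therefore make the block an oriented path ending at the special vertex, producing an acyclic affine diagram: a tail or tails hung on the affine pair.
\item In that acyclic matrix I would compute $\delta$ directly from the Cartan matrix and check that it is the candidate $v$. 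This fails as stated, since the acyclic $\delta$ has nonzero entries on the tails, so the argument must instead run the other way: transport from the acyclic seed to the neighboring seed.
\end{itemize}

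The cleaner route is to avoid computing $\delta$ in the acyclic seed and instead verify directly, in $B$, the two defining properties: $-\frac12Bv$ lies on the imaginary ray, and the imaginary wall lies in $v^\perp$.
\begin{itemize}
\item The first property can be checked using Proposition~\ref{nice mut}. The expanded mutation sequences $\widehat\kk$ fix the imaginary ray pointwise and keep $B$ neighboring, so $\delta^{\mu_{\widehat\kk}(B)}$ should be obtainable from $\delta^B$ by mutations preserving the vanishing of non-affine entries.
\item The key local check is a single mutation at a non-affine index $k$ applied to a vector with $v_k=0$. It leaves $v$ unchanged exactly when the affine neighbors of $k$ contribute zero. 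This is again the dot-product vanishing observed above.
\end{itemize}
So the whole statement is invariant along expanded sequences, and it suffices to verify it for one convenient neighboring matrix per affine type. For that base case I would take the matrix with $B_{11},B_{22},B_{33}$ as small as possible and connect it to an acyclic seed by an explicit short mutation sequence at the special indices.

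The main obstacle I expect is precisely this base case and the bookkeeping of the mutation rule for $\delta^B$. One must confirm that the transport rule used in \cite{affdomreg} is the one under which a zero entry at a mutated index with vanishing weighted neighbor sum stays zero. The twisted cases with entries $4$ or $3$ also need separate care.
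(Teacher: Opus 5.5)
The paper gives no proof of this statement; it is quoted from \cite[Proposition~4.49]{affdomreg}. Your deduction of part~\ref{neigh im ray} from part~\ref{neigh delta} is correct. With $\delta^B$ supported on the affine indices, the affine part of $-\frac12B\delta^B$ is $-\frac12B_{44}$ applied to the affine entries, and each of the seven special rows does pair to zero with those entries.

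The gap is part~\ref{neigh delta}, and the mechanism you propose for it is false. Your key local check says that mutating at a non-affine index $k$ leaves $v$ unchanged when $v_k=0$ and $(Bv)_k=0$. Take the neighboring matrix $B=\begin{bsmallmatrix*}[r]0&1&-1\\-1&0&2\\1&-2&0\end{bsmallmatrix*}$, with affine indices $2,3$ and special index $1$; here $v=(0,1,1)$ and $(Bv)_1=1-1=0$. Then $\mu_1(B)=\begin{bsmallmatrix*}[r]0&-1&1\\1&0&1\\-1&-1&0\end{bsmallmatrix*}$ is acyclic of type $\widetilde A_2$, so $\delta^{\mu_1(B)}=\delta=(1,1,1)$. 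Meanwhile $\delta^B=(0,1,1)$, because both linear pieces of $\eta^{\mu_1(B)^T}_1$ carry $\set{x_1+x_2+x_3=0}$ into $\set{x_2+x_3=0}$ and fix $(0,-1,1)=-\frac12\mu_1(B)\delta$. So a single mutation at the special index moves the entry at index $1$ from $0$ to $1$. The vanishing of $(Bv)_k$ only guarantees that the two linear pieces of $\eta^{B^T}_k$ send $v^\perp$ into one common hyperplane; it does not make that hyperplane $v^\perp$. This is why the sequences $\ll$ in Proposition~\ref{nice mut} must pass through the affine indices $n-1,n$ ($\mu_1(B)$ above is no longer neighboring), and your argument says nothing about $\delta$ along those affine steps.

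Part~3 of Proposition~\ref{nice mut} would let you carry the property ``$-\frac12Bv$ spans the imaginary ray'' along expanded sequences without any transport rule at all. But that only moves the entire content onto the base case, which you do not carry out. You also cannot choose $B_{11},B_{22},B_{33}$ small. Their sizes are invariants of the mutation class (they give the type-C factors of the fan on the relative boundary of the wall, Proposition~\ref{neigh im cone}), and there are infinitely many affine mutation classes. For each class you would need two things:
\begin{enumerate}
\item an explicit mutation sequence from an acyclic seed to a neighboring seed, along which the acyclic $\delta$ (or the ray $-\frac12B\delta$) is actually tracked;
\item an argument that every neighboring matrix in the class is reached from that one by expanded sequences and affine mutations.
\end{enumerate}
Your own aside shows that the acyclic $\delta$ does not have the claimed shape, so this is a genuine computation, not bookkeeping. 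Finally, Propositions~\ref{nice mut}, \ref{neigh im wall} and~\ref{neigh im cone} are Propositions~4.52--4.54 of \cite{affdomreg}. They are stated after this result there (it is Proposition~4.49) and may well be proved using it, so building on them risks circularity.
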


\section{Universal geometric coefficients in affine type}\label{univ sec}
We now describe an application of the results of Sections~\ref{acyc sec} and~\ref{neigh sec} to mutation-linear algebra, or more specifically to the study of universal geometric coefficients for cluster algebras.
Mutation-linear algebra originated in \cite{universal} and has been studied in  \cite{unisurface,unitorus,unisphere,dominance}.
Here we follow the notation of \cite{dominance}.

If $B$ is an exchange matrix not having a row (equivalently column) of zeros, then a \newword{$B$-coherent linear relation} among vectors in $\reals^n$ is a linear relation that is still a linear relation after applying the same arbitrary mutation map to all the vectors.
In other words, it is an expression $\sum_{i\in S}c_i\v_i$ with property that $\sum_{i\in S}c_i\eta_\kk^B(\v_i)$ is the zero vector for every sequence~$\kk$.
(The full definition of a $B$-coherent linear relation \cite[Definition~4.2]{universal} has an additional condition that is redundant unless~$B$ has a row of zeros.)
The \newword{support} of a $B$-coherent linear relation $\sum_{i\in S}c_i\v_i$ is $\set{\v_i:c_i\neq0}$ and the $B$-coherent linear relation is \newword{trivial} if its support is empty.

Suppose $R$ is either the integers $\integers$ or some subfield of $\reals$ that contains $\rationals$ as a subfield.
We write $R^B$ for the \newword{mutation-linear structure} on $R^n$, which is the set $R^n$ and the collection of $B$-coherent linear relations.
One can write much of the usual linear algebra in terms of the set of all linear relations among vectors in $R^n$;
\newword{mutation-linear algebra} results from using the same constructions, but with the set of $B$-coherent linear relations replacing the larger set of all linear relations.

The main thrust of \cite{universal} is to find a \newword{basis for $R^B$} (called an \newword{$R$-basis for $B$} in~\cite{universal}).
This is a set of vectors that is \newword{independent}, meaning that any $B$-coherent linear relation among basis vectors is trivial, and \newword{spanning}, meaning that for any $\a\in R^n$, there is a finite set $\set{\b_i:i\in S}$ of basis vectors and coefficients $c_i\in R$ such that $\a-\sum_{i\in S}c_i\b_i$ is a $B$-coherent linear relation.
A \newword{positive basis for $R^B$} is a basis such that for every $\a\in R^n$, the unique $B$-coherent linear relation $\a-\sum_{i\in S}c_i\b_i$ expressing $\a$ in terms of basis vectors has $c_i\ge0$ for all $i\in S$.

Finding a basis for $R^B$ is equivalent \cite[Theorem~4.4]{universal} to finding a \newword{universal geometric cluster algebra} with exchange matrix $B$, meaning a cluster algebra of geometric type with exchange matrix $B$ such that any other cluster algebra of geometric type with exchange matrix $B$ is obtained by a unique coefficient specialization.
For this to make sense, we work with a notion of cluster algebras of geometric type \cite[Section~2]{universal} that generalizes the usual notion \cite[Section~2]{ca4} by allowing infinitely many coefficient rows, which may have non-integer entries (when~$R$ is strictly larger than~$\integers$).
Finding a basis for $R^B$ is less formally referred to as finding \newword{universal geometric coefficients} for $B$.

When $B$ is of finite type, a basis for $R^B$ consists of the $\g$-vectors of cluster variables associated to~$B^T$.
(This is \cite[Theorem~10.12]{universal}, which is a version of \cite[Theorem~12.4]{ca4}.)
It was conjectured \cite[Conjecture~10.15]{universal} that for any exchange matrix $B$ of affine type, there is a unique vector $\v_\infty$ such that, for any choice of $R$, the $\g$-vectors of cluster variables for $B^T$ and the vector $\v_\infty$ constitute a positive basis for $R^B$.
We will prove the following theorem, which proves \cite[Conjecture~10.15]{universal} and identifies $\v_\infty$ explicitly.

\begin{theorem}\label{uniaffine}
Suppose $B$ is an exchange matrix of affine type.
Then the set of $\g$-vectors of cluster variables associated to $B^T$, together with the vector $-\frac12B^T\delta^{B^T}$, constitute a positive basis for $R^B$ for every $R$.
\end{theorem}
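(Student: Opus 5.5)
Throughout, we work with the transpose: the relevant fan is $\F_{B^T}$, in the sense of Section~\ref{acyc sec} with $B$ replaced by $B^T$. We use the general machinery of \cite{universal}. A set consisting of one nonzero vector in each ray of a simplicial fan is a positive basis for $R^B$ when three conditions hold. First, the fan is the mutation fan or a simplicial refinement of it. Second, the chosen vectors are the primitive integer vectors on the rays, when $R=\integers$. Third, the fan has the property that every integer point of a cone is a nonnegative integer combination of the chosen ray vectors of that cone. The relevant results are \cite[Theorem~6.13, Corollary~6.14]{universal}, or their analogues in \cite{dominance}. By Section~\ref{neigh sec}, $\F_{B^T}$ is simplicial. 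Its rays are the $\g$-vector rays for $B^T$ together with the single imaginary ray. That ray is spanned by the shortest integer vector $-\frac12B^T\delta^{B^T}$. So the statement reduces to checking the hypotheses of that criterion for $\F_{B^T}$, with this choice of ray vectors.

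\textbf{Step 1: reduce to acyclic $B$.} Mutation maps are piecewise-linear isomorphisms of mutation fans. They preserve the integer lattice and carry $B$-coherent relations to $\mu_\kk(B)$-coherent relations. They also map cluster $\g$-vectors to cluster $\g$-vectors and the imaginary ray to the imaginary ray. Hence it suffices to prove the theorem for one $B$ in the mutation class. We may therefore take $B$ to be acyclic, or neighboring, whichever is convenient.

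\textbf{Step 2: the integrality property.} For acyclic $B$, this property is exactly Lemma~\ref{int span}, applied to $B^T$; the transpose of an acyclic affine matrix is again acyclic affine. With this lemma, \cite[Theorem~6.13 / Proposition~6.12]{universal} gives the $\integers$ case: the shortest integer ray vectors form a positive $\integers$-basis. For a field $R$, the same vectors form an $R$-basis by the real/rational version of the criterion. Over fields, the only input needed is that $\F_{B^T}$ is simplicial with these rays, together with the statement that a point lies in the relative interior of exactly one cone.

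\textbf{Step 3: the main obstacle, independence and coherence across the imaginary wall.} The criterion of \cite{universal} also requires that $\F_{B^T}$ be the mutation fan in the strong sense. Concretely, for each cone $C$ and each linear relation among vectors in $C$, the relation must be $B$-coherent. Conversely, each $B$-coherent relation must be "supported" in a cone; this is the condition that makes the set independent. For cones of the $\g$-vector fan this is standard. The difficulty lies in the imaginary cones, which are not full-dimensional. There we must verify that every mutation map is linear on each imaginary cone, and that no nontrivial coherent relation mixes the imaginary ray with $\g$-vectors outside a single cone.

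To handle this, we pass to a neighboring $B$ via Step 1. Then Propositions~\ref{neigh im wall}, \ref{neigh im cone}, and~\ref{factor eta} identify the imaginary wall metrically with $\F_{\Comp(B)^T}$ plus the imaginary ray, with mutation maps acting as in the finite type-C case. We then transport the known result for finite type C \cite[Theorem~10.12]{universal}. Any relation among the imaginary-wall vectors that fails to be coherent for $\Comp(B)$ is detected by some $\eta^{\Comp(B)^T}_\kk$. By Proposition~\ref{factor eta}, that relation is also detected by $\eta^{B^T}_{\widehat\kk}$, so it fails to be $B$-coherent as well. Conversely, by Proposition~\ref{nice mut}, every mutation sequence can be arranged to pass through neighboring seeds, and along such sequences linearity is inherited from type C.

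Finally, positivity follows because each $\a\in R^n$ lies in some cone of $\F_{B^T}$. We write $\a$ as a nonnegative combination of that cone's ray vectors; Step 2 guarantees this combination is integral when $R=\integers$. This relation is coherent by Step 3, and it is unique by independence.
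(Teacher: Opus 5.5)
Your spanning and positivity argument is fine and matches the paper's: each $\a$ lies in a cone of the mutation fan, Lemma~\ref{int span} makes the nonnegative combination integral, and Lemma~\ref{local coherent} makes it coherent. The gap is in independence. As you note in Step~3, the criterion from \cite{universal} needs independence, and independence is the real content of the theorem. You prove it only partially.

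Write $\gp(B^T)$ for the set of $\g$-vectors for $B^T$ together with $-\frac12B^T\delta^{B^T}$. Your type-C argument shows only that a $B$-coherent relation supported \emph{entirely} on $\gp(B^T)\cap\d_\infty^{B^T}$ is trivial. Everything else is covered by ``for cones of the $\g$-vector fan this is standard,'' and that claim is not justified. Nothing in your proposal excludes a $B$-coherent relation whose support contains $\g$-vectors outside the imaginary wall, possibly mixed with wall vectors. Such a relation involves vectors from many different cones, so no cone-by-cone check can rule it out.

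The paper fills exactly this step with Proposition~\ref{im wall B-coherent}, working at an acyclic $B$:
\begin{enumerate}
\item \cite[Proposition~10.1]{universal}, quoted as Proposition~\ref{Tits B-coherent}, confines the support to the hyperplane $(\delta^{B^T})^\perp$. This result rests on sign-coherence.
\item The Coxeter mutation $\eta^B_{n(n-1)\cdots1}$ fixes $B$, so it preserves $B$-coherence and permutes $\gp(B^T)$.
\item By Proposition~\ref{everything to Tits}, its iterates eventually give every vector outside $\d_\infty^{B^T}$ strictly positive pairing with $\delta^{B^T}$. Hence those coefficients vanish.
\end{enumerate}
Only after this does the paper mutate to a neighboring seed and run the type-C argument you sketched.

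Two smaller points. First, your appeal to Proposition~\ref{nice mut} for linearity is invalid: it supplies one specific sequence for each non-affine index, not a way to reroute arbitrary mutation sequences through neighboring seeds. It is also unnecessary, since imaginary cones are cones of the mutation fan \cite[Theorem~2.9]{affscat}, so Lemma~\ref{local coherent} applies directly. Second, because $R^B$ involves the maps $\eta^B$, the type-C step must be applied to $B^T$. That means using the companion $\Comp(B^T)$ and building $\widehat\kk$ from Proposition~\ref{nice mut} for $B^T$. The coefficient on $-\frac12B^T\delta^{B^T}$ then vanishes because it is the only support vector with nonzero affine entries.
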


Some lemmas will be useful in the proof of Theorem~\ref{uniaffine}.
The first two are immediate from the definition (and closely related) and the third is \cite[Proposition~5.9]{universal}.  

\begin{lemma}\label{I can't believe I've never written this down}
For any sequence $\kk$ of indices, an expression $\sum_{i\in S}c_i\v_i$ is a $B$-coherent linear relation if and only if $\sum_{i\in S}c_i\eta_\kk^B(\v_i)$ is a $\mu_\kk(B)$-coherent linear relation.
\end{lemma}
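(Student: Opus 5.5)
The lemma says that an expression $\sum_{i\in S}c_i\v_i$ is $B$-coherent exactly when the transported expression $\sum_{i\in S}c_i\eta_\kk^B(\v_i)$ is $\mu_\kk(B)$-coherent. The plan is to unwind the definition of $B$-coherence and use two facts about mutation maps. The first is that they compose: for sequences $\kk$ and $\ll$, we have $\eta^{\mu_\kk(B)}_\ll\circ\eta^B_\kk=\eta^B_{\ll\kk}$, where $\ll\kk$ is the concatenation. The second is that they are invertible: $\eta^{\mu_\kk(B)}_{\kk^{-1}}$ inverts $\eta^B_\kk$, where $\kk^{-1}$ is $\kk$ reversed. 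Both facts follow immediately from the definition of $\eta$, because the bottom row is carried along under matrix mutation and matrix mutation is an involution at each index. With these in hand, the lemma is a reindexing of the family of all mutation maps.

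For the forward direction, suppose $\sum c_i\v_i$ is $B$-coherent. Let $\ll$ be any sequence. By composition,
\[
\sum c_i\eta^{\mu_\kk(B)}_\ll\bigl(\eta^B_\kk(\v_i)\bigr)=\sum c_i\eta^B_{\ll\kk}(\v_i),
\]
and the right side is zero by $B$-coherence. The empty sequence $\ll$ is included, so the transported expression is itself a linear relation.

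For the converse, apply the forward direction to the matrix $\mu_\kk(B)$ and the sequence $\kk^{-1}$. This uses $\mu_{\kk^{-1}}(\mu_\kk(B))=B$ and $\eta^{\mu_\kk(B)}_{\kk^{-1}}\circ\eta^B_\kk=\mathrm{id}$.

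There is one technical point: the full definition of coherence has an extra condition when the matrix has a zero row. No row of $B$ is zero exactly when no row of $\mu_\kk(B)$ is zero, since mutation preserves this property. Even in the zero-row case, the extra condition in \cite[Definition~4.2]{universal} concerns piecewise-linearity data of the mutation maps, and composition preserves that data in the same way. In the setting of this paper it is moot, because affine-type $B$ has no zero row.

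I expect no real obstacle here. The only care needed is to state the composition identity correctly, that is, to get the order of concatenation right.
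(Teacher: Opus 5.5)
Your proof is correct and is the argument the paper intends when it says this lemma is immediate from the definition. You reindex the family of all mutation maps via $\eta^{\mu_\kk(B)}_\ll\circ\eta^B_\kk=\eta^B_{\ll\kk}$ and get the converse from the reversed sequence $\kk^{-1}$; your aside on the zero-row case is right in substance, though that extra condition is more precisely the tropical identity $\sum c_i\min(\eta^B_\kk(\v_i),\mathbf{0})=\mathbf{0}$ for all $\kk$, which the same reindexing transports.
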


\begin{lemma}\label{wow, also never wrote this down}
For any sequence $\kk$ of indices, a collection $(\b_i:i\in I)$ of vectors in~$R^n$ is a (positive) basis for $R^B$ if and only if $(\eta_\kk^B(\b_i):i\in I)$ is a (positive) basis for $R^{\mu_\kk(B)}$.
\end{lemma}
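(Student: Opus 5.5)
The plan is to derive this directly from Lemma~\ref{I can't believe I've never written this down}, together with the fact that $\eta^B_\kk$ is a bijection of $R^n$ onto itself. All the notions involved (independent, spanning, positive) are defined purely in terms of which formal expressions are coherent linear relations and what their coefficients are. So it should suffice to show that $\eta^B_\kk$ transports coherent relations for $B$ to coherent relations for $\mu_\kk(B)$, with the coefficients unchanged, and that it does so bijectively on the underlying set.

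\emph{Step 1: the mutation map is a bijection of $R^n$.} Let $\kk^{\mathrm{rev}}$ denote the reversed sequence. Since matrix mutation is an involution in each index, mutating the extended matrix $\begin{bsmallmatrix}B\\ \v\end{bsmallmatrix}$ along $\kk$ and then along $\kk^{\mathrm{rev}}$ returns it to itself. Hence $\eta^{\mu_\kk(B)}_{\kk^{\mathrm{rev}}}\circ\eta^B_\kk$ is the identity, and symmetrically in the other order. Each mutation step uses only integer operations on the bottom row (sums, products with integer entries of the matrix, positive parts), so both maps send $R^n$ into $R^n$ for $R=\integers$ or a subfield of $\reals$. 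Also, $B$ has a zero row if and only if $\mu_\kk(B)$ does, so the simplified definition of coherence applies on both sides whenever it applies on one.

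\emph{Step 2: independence.} Take any expression $\sum_{i\in S}c_i\eta^B_\kk(\b_i)$. By Lemma~\ref{I can't believe I've never written this down}, it is a $\mu_\kk(B)$-coherent linear relation if and only if $\sum_{i\in S}c_i\b_i$ is a $B$-coherent linear relation. Because the coefficients are the same, one is trivial exactly when the other is. Thus independence transfers in both directions.

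\emph{Step 3: spanning and positivity.} Given $\a'\in R^n$, set $\a=\eta^{\mu_\kk(B)}_{\kk^{\mathrm{rev}}}(\a')\in R^n$, so that $\eta^B_\kk(\a)=\a'$ by Step~1. A $B$-coherent relation $\a-\sum c_i\b_i$ maps, by the lemma applied to the expression with $\a$ as an extra term of coefficient $1$, to the $\mu_\kk(B)$-coherent relation $\a'-\sum c_i\eta^B_\kk(\b_i)$. The converse direction works the same way. Since the coefficients $c_i$ are preserved, uniqueness and nonnegativity of the expansion also transfer, which gives the positive-basis statement.

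The only point needing care is the bookkeeping in Steps~2 and~3. Expressions must be treated formally, so that a vector such as $\a$ coinciding with some $\b_i$ causes no trouble. Beyond that, the argument is essentially definitional, and I do not expect a genuine obstacle.
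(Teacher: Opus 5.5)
Your proposal is correct and takes the same route as the paper, which states that this lemma is immediate from the definitions and closely related to the preceding lemma on transporting $B$-coherent relations. Your argument spells out that reasoning: transport relations via that lemma with unchanged coefficients, and use that $\eta^B_\kk$ is a bijection of $R^n$ with inverse $\eta^{\mu_\kk(B)}_{\kk^{\mathrm{rev}}}$.
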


\begin{lemma}\label{local coherent} 
If $S$ is finite, $\sum_{i\in S}c_i\v_i$ equals zero, and $\set{\v_i:i\in S}$ is contained in some cone of the mutation fan, then $\sum_{i\in S}c_i\v_i$ is a $B$-coherent linear relation.
\end{lemma}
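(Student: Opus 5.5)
The plan is to use only the defining property of the mutation fan recalled in Section~\ref{acyc sec}: for every sequence $\kk$, the mutation map $\eta^B_\kk$ is linear on every cone of $\F_B$. Here "linear on a cone $C$" means that there is a linear map $L_\kk:\reals^n\to\reals^n$ that agrees with $\eta^B_\kk$ on all of $C$. Since $\eta^B_\kk$ is only piecewise linear on $\reals^n$, this is the one input the argument needs.

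Fix a cone $C$ of $\F_B$ containing $\set{\v_i:i\in S}$, and fix an arbitrary sequence $\kk$. Let $L_\kk$ be the linear map agreeing with $\eta^B_\kk$ on $C$. Since $S$ is finite and every $\v_i$ lies in $C$, we compute
\[\sum_{i\in S}c_i\eta^B_\kk(\v_i)=\sum_{i\in S}c_iL_\kk(\v_i)=L_\kk\Bigl(\sum_{i\in S}c_i\v_i\Bigr)=L_\kk(0)=0.\]
Note that $0\in C$ because $C$ is a cone, although this fact is not even needed: we apply linearity of $L_\kk$ on all of $\reals^n$, not of $\eta^B_\kk$. Since $\kk$ was arbitrary, including the empty sequence, which recovers the hypothesis $\sum c_i\v_i=0$, the expression is a $B$-coherent linear relation in the sense defined above.

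The only delicate point is the case where $B$ has a row of zeros. There, the full definition \cite[Definition~4.2]{universal} imposes an additional condition. I would check that this condition is also a consequence of linearity on the cone $C$. It is a statement about the same finite combination evaluated through maps that are linear on each cone of $\F_B$, so the same computation should apply. Nothing else in the argument should present an obstacle, since the statement is essentially a restatement of the fact that mutation maps are linear on cones of the mutation fan.
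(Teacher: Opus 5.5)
Your linearity argument is correct, and it is the standard argument behind \cite[Proposition~5.9]{universal}, which this paper cites rather than proving: since each $\eta^B_\kk$ agrees with a linear map on $C$, you get $\sum_{i\in S}c_i\eta^B_\kk(\v_i)=0$ for every $\kk$, and that is all the definition requires under the paper's standing assumption that $B$ has no row of zeros. In the zero-row case you deferred, the extra $\min(\cdot,0)$ condition does not follow from linearity of mutation maps alone, because if row $k$ is zero then every $\eta^B_\kk$ is linear across $x_k=0$. The needed input is instead that cones of $\F_B$ are closures of classes on which every sign vector $\operatorname{sgn}(\eta^B_\kk(\cdot))$ is constant, so each $\eta^B_\kk(C)$ lies in a closed coordinate orthant, where $\min(\cdot,0)$ is linear.
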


The following proposition is the specialization of \cite[Proposition~10.1]{universal} to affine type.
(The statement in \cite{universal} is conditional on the ``Standard Hypotheses'', but these are now a general theorem, namely ``sign-coherence of $C$-vectors''.)

\begin{proposition}\label{Tits B-coherent}
Suppose $B$ is acyclic of affine type.
Then any $B$-coherent linear relation supported on the $\g$-vectors of cluster variables associated to $B^T$ and the vector $-\frac12B^T\delta^{B^T}$ is in fact supported on vectors in~$\bigl(\delta^{B^T}\bigr)^\perp$.
\end{proposition}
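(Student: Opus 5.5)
The plan is to exploit the linear action of the Coxeter element on the Tits cone, as in \cite[Proposition~10.1]{universal}. Index $B$ acyclically, with Coxeter element $c$. Suppose $\sum_{i\in S}c_i\v_i$ is a $B$-coherent linear relation whose support lies among the $\g$-vectors for $B^T$ and $-\frac12B^T\delta^{B^T}$. Split $S=S_+\cup S_0\cup S_-$ according to whether $\langle \v_i,\delta^{B^T}\rangle$ is positive, zero, or negative. We must show $c_i=0$ for $i\in S_+\cup S_-$. Since $\g$-vectors off $(\delta^{B^T})^\perp$ lie in the interior of the Tits cone or of its negative, $S_+$ indexes vectors in the $c$-Cambrian part of the doubled Cambrian fan and $S_-$ vectors in the antipodal $(c^{-1})$-Cambrian part.

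\textbf{Step 1.} Consider the source-sequence $\kk_c$ that mutates at $1,2,\ldots,n$ in order. It returns $B$ to itself, so $\eta^B_{\kk_c^m}$ is a self-map of $\reals^n$ for every $m$. By the Cambrian-fan description, on the Tits cone this map is the linear action of $c^{\mp1}$ on $V^*$ (sign depending on conventions). On the negative Tits cone it agrees, after finitely many steps, with a linear map of the same kind. Both cones are $W$-stable, so for $m$ large each $\v_i$ with $i\notin S_0$ is moved linearly by a power of $c^{\pm1}$.

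\textbf{Step 2.} Use affine Weyl group structure: $c$ fixes $\delta$, and a suitable power $c^h$ acts on $V^*$ as a translation $\lambda\mapsto\lambda+\langle\lambda,\delta\rangle\,\tau$ modulo a finite-order part. Here $\tau$ is a fixed nonzero vector in $\delta^\perp$, essentially along the imaginary ray. Vectors in $(\delta^{B^T})^\perp$ therefore have bounded orbits, including the imaginary vector, which is fixed up to the finite part. Vectors with $\langle\v_i,\delta\rangle\neq0$ drift linearly in $m$ along $\pm\tau$. Coherence says $\sum c_i\eta_{\kk_c^{hm}}(\v_i)=0$ for all $m$. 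Comparing the terms linear in $m$ gives one scalar condition $\sum_{i\notin S_0}c_i\langle\v_i,\delta\rangle=0$. The bounded terms recover a relation among the $\eta$-images with $m$-independent coefficients.

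\textbf{Step 3, the main obstacle.} One linear condition is not enough, so the vectors in $S_+\cup S_-$ must be separated. I would first apply a mutation sequence $\kk$ and Lemma~\ref{I can't believe I've never written this down}, chosen so that a single chosen $\v_j$ with $j\in S_+$ becomes the $\g$-vector of an initial cluster variable, i.e.\ a standard basis vector. All other support vectors then lie, after the further Coxeter iterations, in cones whose linearity domains differ from that of $\v_j$. Then compare coordinates along the drift direction: the piecewise-linear maps act on the different cones by distinct affine-Weyl translations. Since finitely many vectors are involved, one can pick $m$ so that all of them are in the linear regime. Extracting the $j$th coordinate growth rate then isolates $c_j$, showing $c_j=0$; the case $j\in S_-$ is symmetric, using $c^{-1}$. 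Making this separation rigorous, namely that distinct $\g$-vectors off $\delta^\perp$ have distinguishable asymptotic behavior under Coxeter mutation, is where I expect the real work lies. If the direct separation fails, I would fall back on Lemma~\ref{local coherent} together with induction on $|S_+\cup S_-|$.
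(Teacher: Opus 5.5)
\textbf{There is a genuine gap: the decisive step is missing, and the mechanism you sketch for it cannot supply it.}

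Steps~1--2 extract nothing beyond ordinary linearity. The scalar condition $\sum_{i\notin S_0}c_i\langle\v_i,\delta\rangle=0$ is simply $\langle\sum_i c_i\v_i,\delta\rangle=0$, because the terms with $i\in S_0$ pair to zero. More generally, consider the stage at which every support vector has entered the regime where the iterated mutation map is linear on it: the same map $\lambda\mapsto\lambda+\langle\lambda,\delta\rangle\tau$ on all off-wall images, and periodic on the finitely many images of wall vectors. From then on, the relation at a late stage is the relation at a fixed earlier stage plus a multiple of $\tau$, and the coefficient of that multiple vanishes automatically. All off-wall vectors drift along the \emph{same} vector $\tau$, so no coordinate growth rate can isolate an individual $c_j$. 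Your claim in Step~3 that different cones are moved by ``distinct affine-Weyl translations'' is false in that regime.

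Whatever coherence adds to linearity must come from the nonlinearity of mutation maps, at the finitely many stages where support vectors lie in different domains of linearity. An example is the identities $\sum_i c_i\,[\eta^B_\kk(\v_i)_k]_+=0$, which coherence forces whenever row $k$ of $\mu_\kk(B)$ is nonzero. These would have to be exploited together with precise knowledge of the $\g$-vector fan of $B^T$; your argument never engages with this. The fallback does not help either: Lemma~\ref{local coherent} produces coherent relations and cannot force coefficients to vanish.

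The paper does not reprove the statement. It takes it as the affine specialization of \cite[Proposition~10.1]{universal}, a result for acyclic exchange matrices whose original Standard Hypotheses are now supplied by sign-coherence. Citing that result is the intended route; otherwise the missing separation argument is the entire proof.

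\textbf{Several ingredients of Steps~1--2 are also inaccurate.}
\begin{itemize}
\item A mutation at a source or sink $s$ acts as the simple reflection only on one of the half-spaces $\set{x_s\ge0}$, $\set{x_s\le0}$, and merely negates $x_s$ on the other. So $\eta^B_{\kk_c}$ agrees with $c^{\pm1}$ only on a proper part of the Tits cone.
\item The map does not preserve the negative Tits cone. By Proposition~\ref{everything to Tits}, every vector outside $\d_\infty^{B^T}$ has infinite orbit and eventually pairs positively with $\delta^{B^T}$. This includes vectors pairing negatively with $\delta^{B^T}$ and vectors in $(\delta^{B^T})^\perp\setminus\d_\infty^{B^T}$.
\item A concrete check: take $B=\begin{bsmallmatrix*}[r]0&2\\-2&0\end{bsmallmatrix*}$ and mutate at $1$, then $2$. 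The map sends $(-1,0)$ to $(1,0)$, so it leaves the negative Tits cone immediately. It sends $(-1,2)$ to $(5,-2)$, changing the pairing with $\delta$ from $1$ to $3$. Since $c$ fixes $\delta$, the map is not $c^{\pm1}$ at this point of the Tits cone.
\item Consequently, ``$S_-$ drifts along $-\tau$'' and ``everything in $(\delta^{B^T})^\perp$ has bounded orbit'' are wrong as stated.
\item The preliminary mutation in Step~3, which makes $\v_j$ an initial $\g$-vector, replaces $B$ by $\mu_\kk(B)$. That matrix is in general neither acyclic nor fixed by a Coxeter sequence, so the dynamics of Steps~1--2 is no longer available unless you restrict to source--sink sequences.
\end{itemize}
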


Recall that when $B$ is acyclic we assume that it is indexed with $b_{ij}\leq 0$ whenever $i>j$.
Suppose $x\in V^*\setminus\d_\infty^B$.
The third assertion of \cite[Proposition~4.17]{affdomreg} says that there exists an integer~$P$ such that $\brr{\bigl(\eta_{12\cdots n}^{B^T}\bigr)^p(x),\delta}>0$ whenever $p\ge P$.
The fourth assertion of the same proposition immediately implies that the $\bigl(\eta_{12\cdots n}^{B^T}\bigr)$-orbit of $x$ is infinite.
We will need a version of these facts where we exchange the roles of $B$ and~$B^T$.
Note that by reversing the roles of $B$ and $B^T$, we also reverse the indexing.
Thus, we have the following proposition.

\begin{proposition}\label{everything to Tits}
Suppose $B$ is acyclic of affine type and $x\in V^*\setminus\d_\infty^{B^T}$.
Then the $\bigl(\eta_{n(n-1)\cdots1}^B\bigr)$-orbit of $x$ is infinite.
Furthermore, there exists an integer~$P$ such that $\brrr{\bigl(\eta_{n(n-1)\cdots1}^B\bigr)^p(x),\delta^{B^T}}>0$ whenever $p\ge P$.
\end{proposition}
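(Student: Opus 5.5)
The plan is to obtain this statement directly from \cite[Proposition~4.17]{affdomreg} by applying that result to the exchange matrix $B^T$ instead of $B$ and translating the indexing. No new geometric argument should be needed; the work is bookkeeping of transposes and labels.

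First I check that $B^T$ satisfies the hypotheses of \cite[Proposition~4.17]{affdomreg}. It has affine type: its Cartan matrix is the transpose of that of $B$, and the transpose of an affine Cartan matrix is again affine. It is also acyclic. With our convention $b_{ij}\le 0$ for $i>j$, the entries of $B^T$ satisfy $(B^T)_{ij}=b_{ji}\le0$ for $i<j$. So $B^T$ satisfies the acyclicity convention after the reindexing $i\mapsto n+1-i$.

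Call the reindexed matrix $B'$, with $B'_{ij}=(B^T)_{n+1-i,\,n+1-j}$. I then apply \cite[Proposition~4.17]{affdomreg} to $B'$. Its third assertion gives an integer $P$ such that $\bigl\langle (\eta_{12\cdots n}^{B'^T})^p(x'),\delta^{B'}\bigr\rangle>0$ for all $p\ge P$, whenever $x'\notin\d_\infty^{B'}$. Its fourth assertion gives that the $\eta^{B'^T}_{12\cdots n}$-orbit of $x'$ is infinite.

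Next I undo the reindexing. Let $\sigma$ be the coordinate permutation induced by $i\mapsto n+1-i$. Relabeling indices conjugates everything by $\sigma$:
\begin{itemize}
\item matrix mutation commutes with relabeling, so $\eta^{\sigma M\sigma^{-1}}_{\sigma(\kk)}=\sigma\circ\eta^M_\kk\circ\sigma^{-1}$;
\item the mutation fan, and hence the imaginary wall, is carried along by $\sigma$;
\item the vector $\delta^{B'}$ is the $\sigma$-image of $\delta^{B^T}$, and the pairing between $V^*$ and $V$ is preserved because both are coordinatized with dual bases permuted the same way.
\end{itemize}
Since $B'^T$ is the relabeling of $B$, the sequence $12\cdots n$ for $B'$ becomes $n(n-1)\cdots1$ for $B$. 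The condition $x'\notin\d_\infty^{B'}$ becomes $x\notin\d_\infty^{B^T}$. The positivity statement becomes $\bigl\langle(\eta_{n(n-1)\cdots1}^B)^p(x),\delta^{B^T}\bigr\rangle>0$ for $p\ge P$, and infinitude of orbits is clearly preserved under conjugation by $\sigma$.

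The main place where care is needed is the role of $\delta$. In \cite{affdomreg}, $\delta$ is the imaginary root of the root system attached to the matrix called $B$ there. After transposing, this must be the imaginary root of the root system of $B^T$, which is precisely $\delta^{B^T}$; this uses the remark that $\delta^B=\delta$ in the acyclic case. Similarly, one must check that the region excluded there, $\d_\infty$ for the fan $\F_{B}$ on which $\eta^{B^T}$ acts, becomes $\d_\infty^{B^T}$, the imaginary wall of the fan $\F_{B^T}$ on which $\eta^B$ acts, and not $\d_\infty^B$. I would verify this by noting that the mutation map $\eta^M$ acts on the fan $\F_M$: in the source result the map is $\eta^{B^T}$, acting on $\F_{B^T}$, and swapping roles gives $\eta^B$ acting on $\F_B$.
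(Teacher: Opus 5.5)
Your proposal is correct and follows the paper's argument: the paper quotes the third and fourth assertions of \cite[Proposition~4.17]{affdomreg}, stated for $\eta^{B^T}_{12\cdots n}$, $x\notin\d_\infty^B$ and $\delta$, and then exchanges the roles of $B$ and $B^T$, noting that this reverses the acyclic indexing so that $12\cdots n$ becomes $n(n-1)\cdots1$. Your explicit relabeling $i\mapsto n+1-i$, and your check that the wall becomes $\d_\infty^{B^T}$ and the root becomes $\delta^{B^T}$, spell out that same bookkeeping in more detail.
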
 

\begin{proposition}\label{im wall B-coherent}
Suppose $B$ is acyclic of affine type.
Then any $B$-coherent linear relation supported on the $\g$-vectors of cluster variables associated to $B^T$ and the vector $-\frac12B^T\delta^{B^T}$ is in fact supported on vectors in~$\d_\infty^{B^T}$.
\end{proposition}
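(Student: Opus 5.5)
Take a $B$-coherent linear relation $\sum_{i\in S}c_i\v_i$ supported on the $\g$-vectors of cluster variables for $B^T$ and the vector $-\frac12B^T\delta^{B^T}$. We may assume every $c_i\neq0$, so the support is $\set{\v_i:i\in S}$. By Proposition~\ref{Tits B-coherent}, every $\v_i$ lies in $\bigl(\delta^{B^T}\bigr)^\perp$. Suppose, for contradiction, that some $\v_i$ is not in $\d_\infty^{B^T}$. The idea is to apply the Coxeter-type mutation map $\eta=\eta^B_{n(n-1)\cdots1}$ repeatedly, which does not change $B$ since $B$ is acyclic and we mutate sources in order. This pushes the vectors outside the imaginary wall off the hyperplane $\bigl(\delta^{B^T}\bigr)^\perp$, and then Proposition~\ref{Tits B-coherent} gives a contradiction.

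The steps are as follows.

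First, check that $\mu_{n(n-1)\cdots1}(B)=B$. This is the standard fact that mutating successively at sinks or sources in order of an acyclic indexing returns $B$. It requires the indexing convention $b_{ij}\le0$ for $i>j$ to match the order $n(n-1)\cdots1$, which is exactly how Proposition~\ref{everything to Tits} is phrased.

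Second, apply Lemma~\ref{I can't believe I've never written this down}: $\sum_i c_i\eta^p(\v_i)$ is again a $B$-coherent linear relation for every $p$.

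Third, check that the new relation is still supported on the same kind of vectors. Since $\eta^p$ is a mutation map with $\mu(B)=B$, it sends $\g$-vectors of cluster variables for $B^T$ to $\g$-vectors of cluster variables for $B^T$. This is the standard fact that mutation maps $\eta^B_\kk$ carry $\g$-vectors for $B^T$ to $\g$-vectors for $\mu_\kk(B)^T$. It also fixes the imaginary ray: the map is a fan isomorphism $\F_B\to\F_B$, and the imaginary ray is the unique ray of $\F_B$ not in any full-dimensional cone, so it is fixed, and integrality together with linearity on cones fixes its shortest integer vector. The map $\eta^p$ is a bijection, so the coefficients stay attached to distinct support vectors. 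Hence Proposition~\ref{Tits B-coherent} applies to the relation $\sum_i c_i\eta^p(\v_i)$, and every $\eta^p(\v_i)$ lies in $\bigl(\delta^{B^T}\bigr)^\perp$.

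Fourth, for the $\v_i\notin\d_\infty^{B^T}$, Proposition~\ref{everything to Tits} gives $P$ with $\brr{\eta^p(\v_i),\delta^{B^T}}>0$ for all $p\ge P$. This contradicts the third step. Therefore every support vector lies in $\d_\infty^{B^T}$.

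The main obstacle is the third step: verifying that $\eta$ preserves $B$ and permutes the allowed support set, including fixing $-\frac12B^T\delta^{B^T}$ itself rather than just its ray. If the ray-uniqueness argument feels too thin, the fallback is to note that $\eta$ is linear on the imaginary ray and preserves the integer lattice, and to invoke the definition of $\delta^{B^T}$ via the Coxeter element being fixed by $c$.
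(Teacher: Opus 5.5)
Your proof is correct and follows the paper's route: iterate $\eta^B_{n(n-1)\cdots1}$ (which fixes $B$), transport the relation with the lemma that mutation maps preserve coherent linear relations, and combine Proposition~\ref{everything to Tits} with Proposition~\ref{Tits B-coherent} to force the coefficients off $\d_\infty^{B^T}$ to vanish. Your third step checks that the iterated relation is still supported on $\g$-vectors for $B^T$ and on $-\frac12B^T\delta^{B^T}$, which is needed for Proposition~\ref{Tits B-coherent} to apply; the paper leaves this implicit, and your justification of it is sound.
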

\begin{proof}
Consider a $B$-coherent linear relation $\sum_{i\in S}c_i\v_i$.
Since $B$ is acyclic and indexed as described above, $\mu_{n(n-1)\cdots1}(B)=B$.
Thus Lemma~\ref{I can't believe I've never written this down} implies that $\sum_{i\in S}c_i\bigl(\eta^B_{(n(n-1)\cdots1)}\bigr)^p(\v_i)$ is a $B$-coherent linear relation for any $p\ge0$.
By Proposition~\ref{everything to Tits}, since the sum is finite, there exists $P$ such that whenever $p\ge P$, we have ${\brrr{\bigl(\eta^B_{(n(n-1)\cdots1)}\bigr)^p(\v_i),\delta^{B^T}}>0}$ for all $i\in S$ such that $\v_i\not\in\d_\infty^{B^T}$.
Therefore, by Proposition~\ref{Tits B-coherent}, we conclude that $c_i=0$ for all $i\in S$ such that $\v_i\not\in\d_\infty^{B^T}$.
\end{proof}

We now prove the main result of the section.

\begin{proof}[Proof of Theorem~\ref{uniaffine}]
For convenience in this proof, we write $\g(B)$ for the set of $\g$-vectors of cluster variables associated to an exchange matrix $B$ and, when $B$ is of affine type, $\gp(B)$ for $\g(B)\cup\set{-\frac12B\delta^B}$.

Suppose $B$ is of affine type.
By Lemma~\ref{wow, also never wrote this down}, we can assume, for now, that $B$ is acyclic.
We first show that $\gp(B)$ is a spanning set, in the sense of mutation-linear algebra.
Suppose $\a\in R^n$.
Then $\a$ is in some cone $C$ of the mutation fan $\F_B$.
If $R=\integers$, then Lemma~\ref{int span} says that $\a$ is a nonnegative $\integers$-linear combination of the shortest integer vectors in the rays of $C$.
Otherwise,~$R$ is a field, so $\a$ is again a nonnegative $R$-linear combination of these shortest vectors.
These shortest vectors are in $\gp(B^T)$.
Now Lemma~\ref{local coherent} implies that the linear relation that writes $\a$ as an $R$-linear combination of these vectors is $B$-coherent.

It remains to show that $\gp(B^T)$ is an independent set, in the sense of mutation-linear algebra.
In light of Proposition~\ref{im wall B-coherent}, we need to prove the following claim:
Any $B$-coherent linear relation supported on $\gp(B^T)\cap\d_\infty^{B^T}$ is trivial.

Up to now, we have been assuming that $B$ is acyclic.
Recall that each mutation map $\eta^B_\kk$ is a piecewise linear isomorphism from $\F_B$ to $\F_{\mu_\kk(B)}$, linear on every cone of~$\F_B$, so that $\eta_\kk^B$ maps the imaginary wall in $\F_B$ to the imaginary wall in $\F_{\mu_\kk(B)}$.
Thus by Lemma~\ref{wow, also never wrote this down} again, it is enough to prove the claim for any exchange matrix mutation-equivalent to~$B$.
Therefore, we now change our assumption on $B$ and take~$B$ to be a neighboring exchange matrix.

It is clear from Theorem~\ref{neigh B} that $B$ is neighboring if and only if $B^T$ is neighboring.
Thus we are free to quote results from Section~\ref{neigh sec} with the roles of $B$ and $B^T$ reversed.
Proposition~\ref{neigh im wall} says that $\d^{B^T}_\infty$ is the half-hyperplane contained in $(\delta^{B^T})^\perp$, containing the vector $-\frac12B^T\delta^{B^T}$, with relative boundary the codimension-$2$ space consisting of vectors that are zero in the affine indices.
Suppose $\kk$ is a sequence of non-affine indices with expanded sequence $\widehat\kk$.
(A technical point:  We determine $\widehat\kk$ using Proposition~\ref{nice mut} \emph{as it applies to $B^T$}.)
Proposition~\ref{factor eta} implies that $\eta^B_{\widehat\kk}(\lambda)$ fixes $-\frac12B^T\delta^{B^T}$ and acts as $\eta_\kk^{\Comp(B^T)^T}$ on $\g$-vectors in the relative boundary of~$\d^{B^T}_\infty$.
As an immediate consequence of Proposition~\ref{neigh im cone}, $\g(B^T)\cap\d^{B^T}_\infty\subseteq\g(\Comp(B^T))$ (continuing to identify $\reals^{n-2}$ with the vectors in $\reals^n$ with affine entries zero).  

Any $B$-coherent linear relation that is supported on $\gp(B^T)\cap\d^{B^T}_\infty$ restricts to a $\Comp(B^T)^T$-coherent linear relation on $\g(\Comp(B^T))$.
Since $\Comp(B^T)$ is of finite type, $\g(\Comp(B^T))$ is a basis for $R^{\Comp(B^T)^T}$, so this restriction has all coefficients zero.
The only remaining potentially nonzero coefficient is the coefficient on $-\frac12B^T\delta^{B^T}$, but this coefficient is then also zero because the $B$-coherent linear relation is in particular an ordinary linear relation.
\end{proof}

\section{Mutation-finiteness with coefficients in affine type}
An $n\times n$ exchange matrix $B=[b_{ij}]$ is of \newword{finite mutation-type} if there are only finitely many different matrices that arise as $\mu_\kk(B)$, as $\kk$ varies over all sequences of indices in $\set{1,\ldots,n}$.
Similarly, an extension $\tB$ of $B$ is of finite mutation type if the set of matrices~$\mu_\kk(\tB)$ is finite, again as $\kk$ varies over all sequences in $\set{1,\ldots,n}$.
The classification of extended exchange matrices of finite mutation-type was carried out by Felikson and Tumarkin in~\cite{FeTuCoeff}, building on the earlier classification of exchange matrices of finite mutation-type.
In particular, exchange matrices of affine type are known to be mutation-finite.
We will rephrase their results in terms of exchange matrices (rather than quivers) and focus on the case in the classification where $B$ is of affine type.
They call a vector \newword{admissible} for $B$ if the extended exchange matrix consisting of $B$ with that vector adjoined as a coefficient row is mutation-finite and observe that $\tB$ is of finite mutation-type if and only if (1) $B$ is of finite mutation type and (2) every coefficient row of $\tB$ is admissible.

The characterization of admissible vectors in affine type assumes that there exist indices $i$ and $j$ such that $\begin{bsmallmatrix}0&b_{ij}\\b_{ji}&0\end{bsmallmatrix}$ is of affine type.
In light of Theorem~\ref{neigh B}, this is if and only if $B$ is a neighboring exchange matrix.
Furthermore, combining \mbox{\cite[Theorem~4.4]{FeTuCoeff}}, \cite[Theorem~9.4]{FeTuCoeff}, and \cite[Theorem~9.6]{FeTuCoeff} with Proposition~\ref{neigh good stuff}, we obtain the following statement:
If $B$ is a neighboring exchange matrix of affine type, then a vector is admissible for $B$ if and only if the vector is contained in~$\d_\infty^{B^T}$.
(Note the transpose in $\d_\infty^{B^T}$.)
As noted in the proof of Theorem~\ref{uniaffine}, each mutation map $\eta_\kk^B$ maps the imaginary wall in $\F_B$ to the imaginary wall in $\F_{\mu_\kk(B)}$.
Thus the Felikson-Tumarkin result implies that if $B$ is an arbitrary exchange matrix of affine type, then a vector is admissible for $B$ if and only if the vector is contained in $\d_\infty^{B^T}$.
We will prove that result directly, using the combinatorics of exchange matrices of acyclic affine type.

\begin{theorem}\label{mutfin coeff}
Suppose $B$ is an arbitrary exchange matrix of affine type and $\tB$ is an extension of~$B$.
Then $\tB$ is of finite mutation-type if and only if every coefficient row of $\tB$ is contained in $\d_\infty^{B^T}$.
\end{theorem}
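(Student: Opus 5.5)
The plan is to reduce to a single coefficient row and then study the orbit of that row under all mutation maps. Since $B$ is of affine type, it is of finite mutation-type. So the set $\set{\mu_\kk(\tB)}$ is finite if and only if, for each coefficient row $x$, the set $\set{\eta^B_\kk(x)}$ is finite as $\kk$ ranges over all sequences. One direction of this uses that there are only finitely many matrices $\mu_\kk(B)$; the other uses that there are only finitely many coefficient rows. It therefore suffices to show that, for a single vector $x$, the set $\set{\eta_\kk^B(x)}$ is finite exactly when $x\in\d_\infty^{B^T}$.

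Both conditions are invariant under replacing $B$ by $\mu_\kk(B)$ and $x$ by $\eta^B_\kk(x)$. The orbit condition is invariant because mutation maps compose, and the wall condition because $\eta^B_\kk$ is an isomorphism $\F_B\to\F_{\mu_\kk(B)}$ that carries the imaginary wall to the imaginary wall, as noted in the proof of Theorem~\ref{uniaffine}. (The imaginary wall is in $\F_B$, which is $\d_\infty^{B^T}$ in the transposed notation.) This lets me choose a convenient representative of the mutation class in each direction.

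\emph{Only if.} Suppose $x\notin\d_\infty^{B^T}$, and mutate so that $B$ is acyclic, indexed so that $b_{ij}\le0$ for $i>j$. Then $\mu_{n(n-1)\cdots1}(B)=B$. Proposition~\ref{everything to Tits} says that the $\eta^B_{n(n-1)\cdots1}$-orbit of $x$ is infinite. Hence the matrices $\mu_{(n(n-1)\cdots1)^p}(\tB)$, for $p\ge0$, all have top part $B$ but infinitely many distinct coefficient rows, so $\tB$ is not mutation-finite.

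\emph{If.} Suppose $x\in\d_\infty^{B^T}$, so $x$ lies in some imaginary cone $C$ of $\F_B$. Let $r_1,\dots,r_k$ be the shortest integer vectors on the rays of $C$ and write $x=\sum c_ir_i$ with $c_i\ge0$. Each $\eta^B_\kk$ is linear on $C$ and maps $C$ onto an imaginary cone of $\F_{\mu_\kk(B)}$, sending rays to rays. Mutation maps are piecewise-linear bijections of the integer lattice, so $\eta^B_\kk(r_i)$ is the shortest integer vector on its image ray. Thus $\eta^B_\kk(x)=\sum c_i\eta^B_\kk(r_i)$, where each $\eta^B_\kk(r_i)$ belongs to the set of primitive ray generators of imaginary cones of $\F_{\mu_\kk(B)}$. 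Since there are only finitely many matrices $\mu_\kk(B)$, finiteness of the orbit follows once each such set of ray generators is finite. Equivalently, the imaginary wall of $\F_{B'}$ must consist of finitely many cones for every $B'$ in the mutation class.

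I expect this last finiteness claim to be the main obstacle, and I would handle it by transport to a neighboring matrix. The number of imaginary cones is invariant under the fan isomorphisms $\eta^B_\kk$, so it suffices to count them for one neighboring $B'$. Theorem~\ref{neigh B} gives $B'$ mutation-equivalent to $B$, and $B'^T$ is also neighboring. Proposition~\ref{neigh im cone}, applied to $B'^T$, describes the restriction of $\F_{B'}$ to $\d_\infty^{B'}$. Its cones are those of $\F_{\Comp(B'^T)^T}$ together with their Minkowski sums with the imaginary ray. Since $\Comp(B'^T)$ is of finite type (a product of type-C blocks), its mutation fan is the finite $\g$-vector fan, so there are finitely many imaginary cones. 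This completes the plan.
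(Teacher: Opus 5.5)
Your proposal is correct and follows essentially the same route as the paper. You transport along mutations to a convenient representative, apply Proposition~\ref{everything to Tits} at an acyclic $B$ for the ``only if'' direction, and for the ``if'' direction combine linearity of mutation maps on an imaginary cone with finiteness of the mutation class and of the set of primitive ray generators of each imaginary wall. You also justify two facts the paper only asserts: that mutation maps send primitive ray generators to primitive ray generators, and that each imaginary wall has finitely many rays, via Proposition~\ref{neigh im cone} at a neighboring matrix. One small correction: the existence of a neighboring matrix in the mutation class does not follow from Theorem~\ref{neigh B}, which is only a characterization. It is, however, the same existence fact the paper itself uses in the proof of Theorem~\ref{uniaffine}, so this does not create a gap.
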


\begin{proof}
As explained above, we need to show that a vector $\a\in V^*$ is admissible if and only if it is in $\d_\infty^B$.
Also as explained above, it is enough to prove the theorem for one exchange matrix in each mutation-equivalence class.
Therefore, we are free to choose $B$ to be acyclic of affine type.
Proposition~\ref{everything to Tits} implies that if $\a\not\in\d_\infty^B$, then $\a$ is not admissible.

On the other hand, suppose $\a\in\d_\infty^B$.
Then $\a$ is contained in some imaginary cone~$C$ of $\F_B$, and the cone $C$ is the nonnegative linear span of a linearly independent collection of vectors that span rays contained in $\d_\infty^B$.
Mutation maps are linear on cones of the mutation fan, so there are finitely many distinct vectors~$\eta_\kk^B(\a)$ as $\kk$ varies over all sequences if the same statement is true for the shortest integer vectors spanning the rays of $C$.

Mutation fans depend only on the exchange matrix and $B$ is of finite mutation type. 
Therefore the set of imaginary walls $\d_\infty^{B'}$ as $B'$ varies over all matrices $B'$ mutationally equivalent to $B$ is finite and mutation maps permute this set.
Further, each imaginary wall contains finitely many rays and mutation maps permute the set of shortest integer vectors in rays of the imaginary walls.
We conclude that each shortest integer vector in a ray of $C$ has finitely many images under mutation maps. 
\end{proof}

\begin{remark}\label{correction}
As already mentioned, Theorem~\ref{mutfin coeff} agrees with the results of \cite{FeTuCoeff} that are stated in terms of affine type.
However, Theorem~\ref{mutfin coeff} constitutes a minor correction to the affine cases of \cite[Theorem~3.2]{FeTuCoeff} and \cite[Theorem~9.2]{FeTuCoeff}, which are stated in terms of marked surfaces and orbifolds.
\newword{Peripheral laminations} are defined in~\cite{FeTuCoeff} to be laminations such that every curve can be isotopically deformed to all or part of some boundary component of the surface, and \cite[Theorem~3.2]{FeTuCoeff} states that a vector is allowable if and only if it is the shear coordinate vector of a peripheral lamination.
However, in the twice-punctured disk (corresponding to affine type $\widetilde D$), Theorem~\ref{mutfin coeff} says that a vector is allowable if and only if it is the shear coordinate vector of a lamination such that every curve either can be isotopically deformed to all or part of the boundary or has endpoints that spiral into the two punctures.
As a fix to \cite[Theorem~3.2]{FeTuCoeff}, we propose to redefine peripheral laminations as laminations $\Lambda$ such that, for every closed curve $C$ that can appear in a lamination, the union $\Lambda\cup\set{C}$ is a lamination (or in other words, $C$ is compatible with every curve in $\Lambda$).
One can check that the twice-punctured annulus is the unique marked surface where these two definitions differ.  
The proof of \cite[Theorem~3.2]{FeTuCoeff} goes through as written, with this alternative definition of peripheral laminations.
The same change to the definition works for marked orbifolds in \cite[Theorem~9.2]{FeTuCoeff}, and the two definitions coincide for orbifolds except in the cases of a disk with 2 ``special points'' (punctures or orbifold points), which are all of affine type.
\end{remark}

\subsection*{Acknowledgments}
We thank Anna Felikson and Pavel Tumarkin for helpful comments on a draft of this paper.

\bibliographystyle{plain}
\bibliography{bibliography}
\vspace{-0.175 em}

\end{document}